\documentclass[11pt]{article}

\usepackage{amsmath, amssymb, amsthm}
\usepackage[all]{xy}
\usepackage[colorlinks]{hyperref}
\usepackage{enumerate}
\usepackage[nohug]{diagrams}
\usepackage{MnSymbol, blkarray}

\textwidth 6.2 truein
\oddsidemargin 0 truein
\evensidemargin .2 truein
\topmargin -.6 truein
\textheight 9.1 in

\newcommand{\Sptn}{\on{Sp}(2n,\mathbb{R})}
\newcommand{\Om}{\on{O}(m)}
\newcommand{\MtnmR}{\on{M}_{2n\times m}(\mathbb{R})}
\newcommand{\Un}{\on{U}(n)}
\newcommand{\Um}{\on{U}(m)}
\newcommand{\MnmC}{\on{M}_{n\times m}(\mathbb{C})}
\newcommand{\MnmR}{\on{M}_{n\times m}(\mathbb{R})}
\newcommand{\MnmRm}{\on{M}_{n\times m}^{\textrm{rk }m}(\mathbb{R})}
\newcommand{\MtnmRm}{\on{M}_{2n\times m}^{\textrm{rk }m}(\mathbb{R})}
\newcommand{\MnmCm}{\on{M}_{n\times m}^{\textrm{rk }m}(\mathbb{C})}

\newcommand{\sptn}{\mathfrak{sp}(2n,\mathbb{R})}
\newcommand{\om}{\mathfrak{o}(m)}
\newcommand{\un}{\mathfrak{u}(n)}
\newcommand{\um}{\mathfrak{u}(m)}
\newcommand{\mfg}{\mathfrak{g}}
\newcommand\gl{\mathfrak {gl}}
\newcommand\GL{\on{GL}}
\renewcommand\sp{\mathfrak {sp}}

\newcommand\on{\operatorname}
\newcommand\Ad{\on{Ad}}
\newcommand\ad{\on{ad}}
\newcommand\rank{\on{rank}}
\newcommand\Emb{\on{Emb}}
\newcommand\ham{\on{ham}}
\newcommand\Diff{\on{Diff}}
\newcommand\Tr{\on{Tr}}
\newcommand\vol{\on{vol}}
\newcommand\RR{\mathbb{R}}
\newcommand\CC{\mathbb{C}}
\newcommand\J{\mathrm{J}}
\newcommand\X{\mathfrak X}

\newtheorem{theorem}{Theorem}[section]
\newtheorem{corollary}[theorem]{Corollary}
\newtheorem{lemma}[theorem]{Lemma}
\newtheorem{proposition}[theorem]{Proposition}
\theoremstyle{definition}
\newtheorem{definition}[theorem]{Definition}
\newtheorem{remark}[theorem]{Remark}
\newtheorem{example}[theorem]{Example}

\begin{document}

\title{Dual pairs for matrix groups}
\author{Paul Skerritt$^{1}$ and Cornelia Vizman$^{2}$ \\
\\
\emph{In honour of Darryl Holm's 70\textsuperscript{th} birthday.}}

\addtocounter{footnote}{1}
\footnotetext{Department of Mathematics, University of Surrey, Guildford GU2 7XH, United Kingdom.
\texttt{p.skerritt@surrey.ac.uk}
\addtocounter{footnote}{1}}

\footnotetext{Department of Mathematics, West University of Timi\c soara. RO--300223 Timi\c soara. Romania.
\texttt{cornelia.vizman@e-uvt.ro}
\addtocounter{footnote}{1} }

\date{ }
\maketitle

\vspace{-.7cm}

\begin{abstract}
In this paper we present two dual pairs  that can be seen as the linear analogues of the following two dual pairs 
related to fluids: the EPDiff dual pair due to Holm and Marsden,
and the ideal fluid dual pair due to Marsden and Weinstein.
\end{abstract}



\section{Introduction}

\subsection{Definitions and new results}

Let $M$ be a symplectic manifold, $P_1, P_2$ Poisson manifolds, and let $\on{J}_1, \on{J}_2$ be a pair of Poisson surjective submersions
\[
P_1\stackrel{\on{J}_1}{\leftarrow} M \stackrel{\on{J}_2}{\rightarrow} P_2,
\]
The maps $\on{J}_1, \on{J}_2$ are said to form a \emph{Lie-Weinstein dual pair} \cite{Weinstein1983, OrtegaRatiu2004} if
the tangent distributions to the fibres of $\on{J}_1, \on{J}_2$ are symplectically orthogonal, i.e., 
\[
(\ker T\on{J}_1)^\omega = \ker T\on{J}_2.
\]
They are said to form a \emph{Howe dual pair} \cite{OrtegaRatiu2004} if the Poisson subalgebras $\on{J}_1^*(C^\infty(P_1))$ and $\on{J}_2^*(C^\infty(P_2))$ of $(C^\infty(M), \lbrace\cdot,\cdot\rbrace)$ centralise one another. Under mild conditions \cite[Proposition 11.1.3]{OrtegaRatiu2004}, the definitions of Lie-Weinstein and Howe dual pairs may be shown to be equivalent.
When such dual pairs exist, the Poisson structures in $P_1$ and $P_2$ are closely related. In particular, it can be shown (see for example \cite[Theorem E.13]{Blaom2001}) that there is a one-to-one correspondence between symplectic leaves of $P_1$ and $P_2$.

Several authors have considered the analogue of dual pairs in situations where $\on{J}_1, \on{J}_2$ are not necessarily submersions, and have suggested conditions under which the above-mentioned one-to-one correspondence between symplectic leaves in $P_1$ and $P_2$ still holds. We mention in particular the work of Ortega  \cite{Ortega2003, OrtegaRatiu2004} on singular dual pairs, and that of Balleier and Wurzbacher \cite{BalleierWurzbacher2012} on the centralising Howe condition without the submersion property.
In this paper, we will use the term ``dual pair'' to mean in this generalised sense, satisfying many of the desired properties of dual pairs, but not necessarily Lie-Weinstein. Specifically, we are concerned with the situation where a dual pair on the symplectic manifold $M$ arises from a pair of Hamiltonian actions of finite-dimensional Lie groups $G_1, G_2$, with corresponding equivariant momentum maps $\on{J}_1:M\rightarrow \mathfrak{g}_1^*$ and $\on{J}_2:M\rightarrow \mathfrak{g}_2^*$. 

We first discuss a criterion we call \emph{mutual transitivity}, meaning the fibres of $\on{J}_1$ are $G_2$-orbits and the fibres of $\on{J}_2$ are $G_1$-orbits. We show that when this criterion is satisfied, 
the coadjoint orbits in $\on{J}_1(M)\subset \mathfrak{g}_1^*$ and $\on{J}_2(M)\subset \mathfrak{g}_2^*$ have a one-to-one correspondence. Additionally, any reduced space for the $G_1$-action is symplectomorphic to a coadjoint orbit in $\on{J}_2(M)$, and similarly with 1 and 2 switched.

We then give several examples of mutually transitive dual pairs. We initially discuss a dual pair first considered by Balleier and Wurzbacher \cite{BalleierWurzbacher2012}. We then construct two Lie-Weinstein dual pairs, inspired by dual pairs related to fluid mechanics, and describe explicitly the (co)adjoint orbit correspondence between their images. The first of these was considered in \cite[pp.502-506]{KazhdanKostantSternberg1978}, although the full adjoint orbit correspondence was not provided there. The second is to our knowledge novel. We also point out some interesting relations between the momentum maps of the three examples considered, reminiscent of the seesaw pairs of dual pairs \cite{Kudla1984}.

For other discussions of the relationship between reduced spaces and coadjoint orbits in matrix group dual pairs, see \cite{BalleierWurzbacher2012, KazhdanKostantSternberg1978, LermanMontgomerySjamaar1993}. In \cite[Proposition 2.6]{BalleierWurzbacher2012}, the authors show that mutual transitivity is a consequence of the \emph{symplectic Howe condition} (stating $\on{J}_1^*(C^\infty(\mfg_1^*))$ and $\on{J}_1^*(C^\infty(\mfg_2^*))$ centralise one another), plus properness of both actions. Since in both of our main examples the action is not proper (see Sections \ref{sec:mtSpO} and \ref{sec:mtGLnGLm}), we focus instead on mutual transitivity.

\subsection{Motivation for Weinstein's definition of dual pairs}

Inspired by the work of Lie \cite{Lie1890}, Kazdhan, Kostant, and Sternberg \cite{KazhdanKostantSternberg1978}, and Howe \cite{Howe1989}, dual pairs were introduced by Weinstein \cite{Weinstein1983} in the context of the following problem. Suppose $P$ is a Poisson manifold, and we wish to find a canonical form for the Hamiltonian flow generated by some $h\in C^\infty(P)$. One approach to doing so is to introduce a so-called \emph{symplectic realisation} of $P$, which is a surjective Poisson map $\on{J}: M\rightarrow P$, where $M$ is some symplectic manifold. Since $\on{J}$ is Poisson, it respects dynamics in the sense that it intertwines the Hamiltonian flow in $M$ generated by $h\circ \on{J}$ and the Hamiltonian flow in $P$ generated by $h$. Since Hamiltonian flows in a symplectic manifold always have a local canonical form, expressed in Darboux coordinates, this gives a local canonical form for the Hamiltonian flow in $P$. This is the essential idea behind the introduction of \emph{Clebsch variables} in continuum problems \cite{MarsdenWeinstein1983, HolmMarsden2004}.

Suppose further that $\on{J}:M\rightarrow P$ is a submersion with connected fibres, and let $\mathcal{D}$ denote the (regular) foliation defined by these fibres. Let $D:=T\mathcal{D}$ be the corresponding involutive distribution, and $D^\omega$ the symplectically orthogonal distribution. Using the submersion and Poisson properties of $\on{J}$, it may be shown that $D^\omega$ is also involutive, and so integrates to a regular foliation $\mathcal{D}^\omega$. If we further suppose $M/\mathcal{D}^\omega$ has a smooth structure making the projection $\pi:M\rightarrow M/ \mathcal{D}^\omega$ a submersion, then $M/\mathcal{D}^\omega$ may be given a Poisson structure with respect to which $\pi$ is Poisson, and we end up with a with a pair of Poisson maps $P\stackrel{\on{J}}{\leftarrow} M \stackrel{\pi}{\rightarrow} M/\mathcal{D}^\omega$ with symplectically orthogonal fibres. Then the Hamiltonian flow in $M$ generated by $h\circ\on{J}$ preserves the leaves of $\mathcal{D}^\omega$, and so trajectories of this flow are mapped by $\pi$ to points of $M/\mathcal{D}^\omega$. Hence in some sense $P$ describes the dynamics of the problem, while $M/\mathcal{D}^\omega$ describes the dynamical invariants of its symplectic realisation.

The above construction motivates the general definition of a Lie-Weinstein dual pair 
$P_1\stackrel{\on{J}_1}{\leftarrow} M \stackrel{\on{J}_2}{\rightarrow} P_2$.
In general the fibres of $\on{J}_1, \on{J}_2$ are orbits (at least formally) of infinite-dimensional Lie groups, with Lie algebras isomorphic to the Poisson subalgebras $\on{J}_2^*(C^\infty(P_2))$ and $\on{J}_1^*(C^\infty(P_1))$ of the Poisson algebra $(C^\infty(M), \lbrace\cdot,\cdot \rbrace)$. In this paper we are interested in the case where the dual pair arises from a pair of Hamiltonian Lie group actions of groups $G_1, G_2$, with corresponding equivariant momentum maps $\on{J}_i:M\rightarrow \on{J}_i(M)\subset \mathfrak{g}_i^*$.

\subsection{Motivation for the particular dual pairs considered in this paper}

The original motivation for this paper was to find two dual pairs that can be seen as 
the linear analogues of the following two dual pairs 
related to fluids: the EPDiff dual pair introduced by Holm and Marsden in
\cite{HolmMarsden2004} and the ideal fluid dual pair introduced by Marsden and Weinstein in \cite{MarsdenWeinstein1983} (proven to be indeed dual pairs in \cite{GayBalmazVizman2011}, where the additional technicalities in defining infinite-dimensional dual pairs are discussed). EPDiff stands here for Euler-Poincar\'e equation on the 
diffeomorphism group.

The EPDiff dual pair involves the manifold of embeddings $\Emb(S,N)$ of a compact manifold $S$ into a manifold $N$, acted on by the diffeomorphism groups $\Diff(N)$ 
from the left and by $\Diff(S)$ from the right. The momentum maps for the lifted cotangent actions,
 restricted to the open subset $T^*\Emb(S,N)^\times$ of $T^*\Emb(S,N)$
 that consists of nowhere zero 1-form densities, define a dual pair\footnote{See \cite{GayBalmazVizman2011} for the precise definition of dual pair in the infinite-dimensional context.}
\begin{equation*}
\X(N)^*\stackrel{\mathrm{J}_{L}}{\longleftarrow} T^*\Emb(S,N)^\times
\stackrel{\mathrm{J}_{R}}{\longrightarrow} \X(S)^*.
\end{equation*}

The linear analogues of these actions are the left $\GL(n,\RR)$-action and the right
$\GL(m,\RR)$-action on the manifold  of rank $m$ matrices $\MnmRm$
(identified with linear injective maps $:\RR^m\to\RR^n$).
We show in Section \ref{s5} that the lifted cotangent momentum maps,
restricted to an open subset
of $T^*\MnmRm$, define a dual pair
\begin{equation*}
\gl(n,\RR)^*_{\on{J}_L}\stackrel{\mathrm{J}_{L}}{\longleftarrow} \MnmRm\times \MnmRm
\stackrel{\mathrm{J}_{R}}{\longrightarrow} \gl(m,\RR)^*_{\on{J}_R},
\end{equation*}
where $\gl(n,\RR)^*_{\on{J}_L}$ denotes the image of $\on{J}_L$ in the dual Lie algebra $\gl(n,\RR)^*$, and similarly for $\gl(m,\RR)^*_{\on{J}_R}$.

For the ideal fluid dual pair one notices that, given a compact manifold $S$
endowed with a volume form $\mu$, and a manifold $M$ endowed with an exact symplectic form $\omega$, 
the group $\Diff_{\vol}(S)$ of volume preserving diffeomorphisms and the group $\Diff_{\ham}(M)$ of Hamiltonian diffeomorphisms  act in a Hamiltonian way on
$\Emb(S,M)$. The symplectic form considered on $\Emb(S,M)$
is built in a natural way with the differential forms $\mu$ and $\omega$.
The momentum map for the right $\Diff_{\vol}(S)$-action together with the momentum map for the left action of the quantomorphism group,
a one dimensional central extension of $\Diff_{\ham}(M)$ that integrates $C^\infty(M)$, define a dual pair
\begin{equation*}
C^\infty(M)^*\stackrel{\mathrm{J}_{L}}{\longleftarrow} \Emb(S,M)^\times
\stackrel{\mathrm{J}_{R}}{\longrightarrow} \X_{\vol}(S)^*.
\end{equation*}

In the special case $M=T^*N$ the inclusion of $\Emb(S,M)$ in $T^*\Emb(S,N)$
naturally induced by the volume form $\mu$ is symplectic.
Thus the linear analogue of the symplectic manifold $\Emb(S,M)$
is the manifold of rank $m$ matrices
$\MtnmRm$
(identified with linear injective maps $:\RR^m\to\RR^{2n}$) and
endowed with symplectic form induced by the cotangent symplectic form
on $T^*\MnmR$, namely the linear symplectic form on the vector space $\MtnmR$ 
\[
\Omega(E,F) := \Tr(E^\top\mathbb{J}F),\quad\text{ where } \mathbb{J} = \begin{bmatrix} 0_n & I_n \\ -I_n & 0_n\end{bmatrix}.
\]
It is not difficult to see that the maximal Lie subgroups of $\GL(2n,\RR)$ and $\GL(m,\RR)$ that preserve this symplectic form are the real symplectic group $\Sptn$ and the orthogonal group $\Om$. We show in Section \ref{s4} that the momentum maps for these two actions define a dual pair 
\begin{equation*}
\sptn^*_{\on{J}_L}\stackrel{\mathrm{J}_{L}}{\longleftarrow} \MtnmRm
\stackrel{\mathrm{J}_{R}}{\longrightarrow} \om^*_{\on{J}_R}.
\end{equation*}
In the special case $m=2n$ this dual pair appears in \cite{Skerritt} (in the context of semiclassical quantum mechanics).

\subsection{Outline of paper}
In Section \ref{s2}, we define the notion of mutually transitive actions, describe the coadjoint orbit and coadjoint orbit-reduced space correspondences, and discuss the relation between mutually transitivity and Lie-Weinstein dual pairs. In Section \ref{s3}, we describe the $(\Un,\Um)$ dual pair, first considered by Balleier and Wurzbacher \cite{BalleierWurzbacher2012}, and prove it satisfies mutual transitivity. In Section \ref{s4}, we construct the $(\Sptn, \Om)$ dual pair, which is the analogue of the ideal fluid dual pair, prove it satisfies mutual transitivity, explicitly describe the (co)adjoint orbit correspondence, and point out some connections with the $(\Un,\Um)$ dual pair. In Section \ref{s5}, we construct the $(\GL(n,\RR), \GL(m,\RR))$ dual pair, which is the analogue of the EPDiff dual pair, prove it satisfies mutual transitivity, explicitly describe the (co)adjoint orbit correspondence, and point out some connections with the $(\Un,\Um)$ dual pair.


\section{Mutually transitive actions and dual pairs}\label{s2}

In this section, we first introduce the notion of \emph{mutually transitive actions}, and indicate the resulting coadjoint orbit correspondence. We then describe how mutual transitivity allows us to view reduced spaces of one action as coadjoint orbits of the other. We emphasise here that by contrast with other treatments in the literature, this correspondence invokes only smoothness of the actions, and does not require properness. Finally we outline the relationship between mutual transitivity and Lie-Weinstein dual pairs.

Propositions \ref{prop:coadinv}, Lemma \ref{lemma:mommappullback}, and Proposition \ref{prop:redcoad} are also proved in \cite[Theorem 2.9]{BalleierWurzbacher2012}. We choose to include them here for completeness, since the first two are short, while our treatment of the last differs somewhat from that in \cite{BalleierWurzbacher2012}.

A fuller treatment of dual pairs and related concepts can be found in \cite[Section IV.7]{LibermannMarle1987}, \cite[Chapter 11]{OrtegaRatiu2004}, and \cite{BalleierWurzbacher2012}.

\subsection{Mutually transitive actions}

Let $(M,\omega)$ be a symplectic manifold, and let $\Phi_1:G_1\times M \rightarrow M$ and $\Phi_2:G_2\times M\rightarrow M$ be symplectic actions. We assume $M$, $G_1$, and $G_2$ are all finite-dimensional (dual pairs in infinite dimensions are discussed in \cite{GayBalmazVizman2011}).

\begin{definition}
We say the actions $\Phi_1, \Phi_2$ are \emph{mutually transitive} if the following three properties hold:
\begin{itemize}
\item
$\Phi_1$ and $\Phi_2$ commute,
\item
$\Phi_1$ and $\Phi_2$ are Hamiltonian actions, with corresponding equivariant momentum maps $\on{J}_1:M\to\mfg_1^*$ and $\on{J}_2:M\to\mfg_2^*$,
\item
each level set of $\on{J}_1$ is a $G_2$-orbit and vice versa, i.e., for any $x\in M$,
\[
\on{J}_1^{-1}(\on{J}_1(x)) = G_2\cdot x \qquad\textrm{and}\qquad \on{J}_2^{-1}(\on{J}_2(x)) = G_1\cdot x.
\]
\end{itemize}
\end{definition}

Denoting the coadjoint orbit in $\mfg_i^*$ through $\mu_i$ by $\mathcal{O}_{\mu_i}$, we then have

\begin{proposition}\label{prop:coadinv}
Let $\Phi_1, \Phi_2$ be mutually transitive actions, with equivariant momentum maps $\on{J}_1, \on{J}_2$. Then for all $x\in M$,
\[
\on{J}_1^{-1}(\mathcal{O}_{\on{J}_1(x)}) = \on{J}_2^{-1}(\mathcal{O}_{\on{J}_2(x)}).
\]
\end{proposition}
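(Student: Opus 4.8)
The plan is to show the two sets coincide by proving each is equal to the orbit $G_1\cdot G_2\cdot x = G_2\cdot G_1\cdot x$ (these are equal since $\Phi_1$ and $\Phi_2$ commute). First I would observe that equivariance of $\on{J}_1$ means $\on{J}_1(\Phi_1(g,y)) = \Ad^*_{g^{-1}}\on{J}_1(y)$, so $\on{J}_1$ maps the $G_1$-orbit through $y$ onto the coadjoint orbit $\mathcal{O}_{\on{J}_1(y)}$; conversely, if $\on{J}_1(z)\in\mathcal{O}_{\on{J}_1(x)}$, then $\on{J}_1(z) = \Ad^*_{g^{-1}}\on{J}_1(x) = \on{J}_1(\Phi_1(g,x))$ for some $g\in G_1$, and so $z$ and $\Phi_1(g,x)$ lie in the same level set of $\on{J}_1$. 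By mutual transitivity that level set is a single $G_2$-orbit, hence $z = \Phi_2(h, \Phi_1(g,x))$ for some $h\in G_2$, i.e.\ $z\in G_2\cdot G_1\cdot x$. This gives $\on{J}_1^{-1}(\mathcal{O}_{\on{J}_1(x)}) \subseteq G_2\cdot G_1\cdot x$, and the reverse inclusion is immediate from equivariance of $\on{J}_1$ together with the fact that $\on{J}_1$ is constant on $G_2$-orbits (again by mutual transitivity: a $G_2$-orbit is a level set of $\on{J}_1$). Hence $\on{J}_1^{-1}(\mathcal{O}_{\on{J}_1(x)}) = G_2\cdot G_1\cdot x$.

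Running the identical argument with the roles of $1$ and $2$ interchanged yields $\on{J}_2^{-1}(\mathcal{O}_{\on{J}_2(x)}) = G_1\cdot G_2\cdot x$. Since the two actions commute, $G_2\cdot G_1\cdot x = G_1\cdot G_2\cdot x$, and the two preimages are therefore equal, which is the claim.

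I do not expect a serious obstacle here: the proof is essentially a bookkeeping exercise combining equivariance of the momentum maps with the defining level-set property of mutual transitivity. The one point that warrants a little care is the direction showing $\on{J}_1$ is constant on each $G_2$-orbit — this is exactly the statement that every $G_2$-orbit is contained in a level set of $\on{J}_1$, which is part of the mutual transitivity hypothesis (indeed it follows already from commutativity of the actions plus $\on{J}_1$ being a momentum map for $\Phi_1$, since $\Phi_2$ then preserves $\omega$ and commutes with the Hamiltonian vector fields generating $\Phi_1$, forcing $\on{J}_1$ to be $G_2$-invariant when $G_2$ is connected; but we may simply invoke the hypothesis directly). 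Everything else is a direct translation between orbit membership and coadjoint-orbit membership via the equivariance identity $\on{J}_i\circ\Phi_i(g,\cdot) = \Ad^*_{g^{-1}}\circ\on{J}_i$.
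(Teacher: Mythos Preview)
Your proof is correct and follows essentially the same approach as the paper: both arguments identify each preimage $\on{J}_i^{-1}(\mathcal{O}_{\on{J}_i(x)})$ with the combined orbit $G_1\cdot G_2\cdot x = G_2\cdot G_1\cdot x$ via equivariance and the level-set hypothesis. The paper simply records this as a single chain of equalities rather than arguing the two inclusions separately.
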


\begin{proof}
\begin{align*}
\on{J}_1^{-1}(\mathcal{O}_{\on{J}_1(x)}) &= G_1\cdot \on{J}_1^{-1}(\on{J}_1(x)) \qquad\textrm{since $\on{J}_1$ is $G_1$-equivariant} \\
&= G_1\cdot (G_2\cdot x) \qquad\textrm{since $\Phi_2$ is transitive on the fibres of $\on{J}_1$}\\
&= G_2\cdot (G_1\cdot x) \qquad \textrm{since the actions $\Phi_1$ and $\Phi_2$ commute}\\
&= G_2\cdot\on{J}_2^{-1}(\on{J}_2(x)) \qquad\textrm{since $\Phi_1$ is transitive on the fibres of $\on{J}_2$}\\
&= \on{J}_2^{-1}(\mathcal{O}_{\on{J}_2(x)}) \qquad \textrm{since $\on{J}_2$ is $G_2$-equivariant}.
\end{align*}
\end{proof}

\begin{corollary}[{\cite[Theorem 2.9(i)]{BalleierWurzbacher2012}}]\label{coro}
In this situation, there exists a one-to-one correspondence between coadjoint orbits in $\on{J}_1(M)$ and $\on{J}_2(M)$ given by
\[
\mathcal{O}_{\mu_1}\mapsto \on{J}_2(\on{J}_1^{-1}(\mathcal{O}_{\mu_1})) { =\on{J}_2(\on{J}_1^{-1}(\mu_1))}
\]
or equivalently
\[
\qquad\qquad\mathcal{O}_{\on{J}_1(x)}\mapsto \mathcal{O}_{\on{J}_2(x)} \qquad\textrm{for }x\in M.
\]
\end{corollary}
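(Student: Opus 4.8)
The plan is to deduce everything from Proposition \ref{prop:coadinv} together with the $G_i$-invariance of the images $\on{J}_i(M)$. First I would record the elementary observation that for each $i$ the set $\on{J}_i(M)\subset\mfg_i^*$ is a union of coadjoint orbits: if $\mu_i=\on{J}_i(x)$ then $G_i$-equivariance of $\on{J}_i$ gives $\mathcal{O}_{\mu_i}=\on{J}_i(G_i\cdot x)\subset\on{J}_i(M)$. Consequently every coadjoint orbit meeting $\on{J}_i(M)$ is entirely contained in it, and hence $\on{J}_i\bigl(\on{J}_i^{-1}(\mathcal{O})\bigr)=\mathcal{O}$ for any such orbit $\mathcal{O}$ (using that for an arbitrary set $A$ one has $\on{J}_i(\on{J}_i^{-1}(A))=A\cap\on{J}_i(M)$).

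Next I would fix $x\in M$, set $\mu_1=\on{J}_1(x)$, and compute
\[
\on{J}_2\bigl(\on{J}_1^{-1}(\mathcal{O}_{\mu_1})\bigr)
=\on{J}_2\bigl(\on{J}_2^{-1}(\mathcal{O}_{\on{J}_2(x)})\bigr)
=\mathcal{O}_{\on{J}_2(x)},
\]
where the first equality is Proposition \ref{prop:coadinv} and the second is the observation above applied with $i=2$. In particular the left-hand side depends only on the set $\on{J}_1^{-1}(\mathcal{O}_{\mu_1})$, hence only on the orbit $\mathcal{O}_{\mu_1}$, so the assignment $\mathcal{O}_{\mu_1}\mapsto\on{J}_2(\on{J}_1^{-1}(\mathcal{O}_{\mu_1}))$ is well defined and takes values in coadjoint orbits inside $\on{J}_2(M)$; it is precisely the map $\mathcal{O}_{\on{J}_1(x)}\mapsto\mathcal{O}_{\on{J}_2(x)}$. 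The alternative description $\on{J}_2(\on{J}_1^{-1}(\mu_1))=\on{J}_2(G_2\cdot x)=\mathcal{O}_{\on{J}_2(x)}$ then follows, since the fibre $\on{J}_1^{-1}(\mu_1)$ equals $G_2\cdot x$ by mutual transitivity and $\on{J}_2$ is $G_2$-equivariant.

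Finally I would run the same argument with the roles of $1$ and $2$ exchanged to obtain a map $\mathcal{O}_{\on{J}_2(x)}\mapsto\mathcal{O}_{\on{J}_1(x)}$ from coadjoint orbits in $\on{J}_2(M)$ to those in $\on{J}_1(M)$, and observe that, since every coadjoint orbit in $\on{J}_i(M)$ is of the form $\mathcal{O}_{\on{J}_i(x)}$ for some $x\in M$ by the first observation, the two maps are mutually inverse. This gives the claimed bijection. The argument is essentially bookkeeping layered on Proposition \ref{prop:coadinv}; the only point that needs a moment's care is the equivariance-based fact that coadjoint orbits meeting $\on{J}_i(M)$ lie inside it, which is what makes $\on{J}_i\circ\on{J}_i^{-1}$ restrict to the identity on such orbits and thereby makes the correspondence well defined.
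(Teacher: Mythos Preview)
Your proof is correct and takes essentially the same approach as the paper: both deduce the correspondence directly from Proposition~\ref{prop:coadinv}, with the paper simply stating in one line that this proposition ``shows that the defined map between coadjoint orbits is invertible.'' Your version spells out carefully the well-definedness, the equivariance-based fact that $\on{J}_i(M)$ is coadjoint-invariant, and the mutual inversion, which is useful added detail but not a different strategy.
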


\begin{proof}
Proposition \ref{prop:coadinv} shows that the defined map between coadjoint orbits is invertible. 
\end{proof}

\subsection{The relation between coadjoint orbits and reduced spaces}

Let $\Phi$ be any Hamiltonian action of $G$ on $M$ with equivariant momentum map $\on{J}$, and  let $\pi:M\rightarrow M/G$ denote the quotient map. For $\mu\in\on{J}(M)$, let $G_\mu$ denote the coadjoint stabiliser subgroup of $G$ at $\mu$, let $M_\mu$ be the set $\on{J}^{-1}(\mu)/G_\mu\simeq \on{J}^{-1}(\mathcal{O}_\mu)/G$, and let $\pi^\mu:\on{J}^{-1}(\mu)\rightarrow M_\mu\subset M/G$ denote the restriction of $\pi$ to $\on{J}^{-1}(\mu)$. In favourable situations (for example, if the group action $\Phi$ is free and proper), $M_\mu$ can be given a differentiable structure with respect to which $\pi^\mu$ is a submersion, and a symplectic structure $\omega_{M_\mu}$ satisfying $(\pi^\mu)^*\omega_{M_\mu} = (i^\mu)^*\omega$, where $i^\mu:\on{J}^{-1}(\mu)\hookrightarrow M$ is the inclusion.  The resulting symplectic manifold $(M_\mu, \omega_{M_\mu})$ is called the \emph{reduced space} or the \emph{Marsden-Weinstein-Meyer quotient} at $\mu\in\mfg^*$.

In this subsection, we demonstrate that as a consequence of mutual transitivity, such a differentiable and symplectic structure can always be defined on the reduced spaces corresponding to each action. Moreover, the reduced space $M_{\mu_1}$ of the $G_1$-action is symplectomorphic to a coadjoint orbit $\mathcal{O}_{\mu_2} \subset\mfg_2^*$ (for some related $\mu_2$). 

We first recall the concept of an \emph{initial submanifold}, and its relationship to group orbits.

\begin{definition}\cite[Section 1.1.8]{OrtegaRatiu2004} \label{defn:initialsub}
Let $M$ be a manifold, and $N$ a subset of $M$ endowed with its own manifold structure, such that the inclusion $i:N \hookrightarrow M$ is an immersion (i.e., $N$ is an immersed submanifold of $M$). We say $N$ is an \emph{initial submanifold} of $M$ if for any manifold $P$, a map $g:P\rightarrow N$ is smooth iff $i\circ g:P\rightarrow M$ is smooth.
\end{definition}

\begin{remark}
Stated differently, Definition \ref{defn:initialsub} says that if $h:P\rightarrow M$ is a smooth map with image contained in $N$, then $h$ corestricts to a smooth map $h':P\rightarrow N$.
\end{remark}

\begin{lemma}\rm{\cite[Proposition 2.3.12 (i)]{OrtegaRatiu2004}} \label{lemma:initialsub}
If $\Phi:G\times M\rightarrow M$ is a smooth $G$-action, then the orbit $G\cdot x$ through $x\in M$ is an initial submanifold of $M$.
\end{lemma}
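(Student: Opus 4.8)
The plan is to first equip the orbit with its usual homogeneous-space manifold structure, and then to verify the defining property of Definition~\ref{defn:initialsub} by reducing it to a local statement around each point and producing a chart of $M$ that is ``flattened'' along the orbit. \emph{Step 1 (manifold structure on the orbit).} The isotropy group $G_x=\{g\in G:g\cdot x=x\}$ is closed in $G$, being the preimage of $\{x\}$ under the continuous map $g\mapsto\Phi(g,x)$, hence by Cartan's closed subgroup theorem an embedded Lie subgroup; thus $G/G_x$ is a smooth manifold and $\pi\colon G\to G/G_x$ is a surjective submersion. The orbit map $\Phi_x\colon G\to M$, $g\mapsto g\cdot x$, has $T_e\Phi_x(\xi)=\xi_M(x)$ (the value at $x$ of the fundamental vector field of $\xi$), with kernel $\mathfrak{g}_x=\ker T_e\pi$, so it descends to a smooth injective map $\overline\Phi\colon G/G_x\to M$ whose differential at $eG_x$ is injective. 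Since $\overline\Phi$ intertwines left translations on $G/G_x$ with the $G$-action on $M$ and being an immersion is a pointwise condition, $\overline\Phi$ is an immersion onto $G\cdot x$. We transport the manifold structure of $G/G_x$ to $G\cdot x$ along $\overline\Phi$; then $i\colon G\cdot x\hookrightarrow M$ is a smooth injective immersion and the orbit map $q=\overline\Phi\circ\pi\colon G\to G\cdot x$ is a surjective submersion.

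\emph{Steps 2--3 (localisation and the adapted chart).} Let $P$ be a manifold and $h\colon P\to M$ smooth with $h(P)\subseteq G\cdot x$. Since $i$ is injective the set-theoretic corestriction $h'\colon P\to G\cdot x$ is well defined, and smoothness of $h'$ is a local property; so it suffices, for each $p_0\in P$, to find a connected open neighbourhood of $p_0$ on which $h'$ is smooth. Set $y:=h(p_0)$. The geometric heart of the proof is the existence of a chart $(U,\phi)$ of $M$ about $y$, with $\phi(U)=A\times B$ a product of open cubes and $\phi(y)=(0,0)$, such that
\[
U\cap(G\cdot x)=\bigsqcup_{b\in C}\phi^{-1}(A\times\{b\})
\]
for some at most countable $C\subseteq B$ with $0\in C$, and such that each plaque $\phi^{-1}(A\times\{b\})$, $b\in C$, is open in the orbit manifold $G\cdot x$. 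Indeed, the fundamental vector fields span a generalized distribution whose rank is constant (and equal to $\dim(G\cdot x)$) along the orbit, so it can be rectified near $y$; the orbit, being a connected integral manifold through $y$, is therefore locally a union of plaques, and only countably many of these meet $U$ because $G$ — hence $G\cdot x$ — is second countable. (Alternatively, one may simply invoke Sussmann's orbit theorem, which furnishes exactly such distinguished charts for orbits of smooth actions.)

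\emph{Step 4 (conclusion).} Fix a connected open neighbourhood $P_0$ of $p_0$ contained in $h^{-1}(U)$. The map $\mathrm{pr}_B\circ\phi\circ h\colon P_0\to B$ is smooth, takes values in the countable — hence totally disconnected — set $C$, and equals $0$ at $p_0$; it is therefore identically $0$. Thus $h(P_0)$ lies in the single plaque $\Sigma:=\phi^{-1}(A\times\{0\})$, which is an embedded submanifold of $M$ and at the same time an open submanifold of $G\cdot x$. Since the corestriction of a smooth map to an embedded submanifold is smooth, $h$ corestricts to a smooth map $P_0\to\Sigma$, and composing with the open embedding $\Sigma\hookrightarrow G\cdot x$ shows that $h'$ is smooth on $P_0$. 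As $p_0$ was arbitrary, $h'$ is smooth and $G\cdot x$ is an initial submanifold.

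\emph{Main obstacle.} The only nontrivial point is the existence of the flattened chart of Steps~2--3. A genuine submanifold chart cannot be expected, since $G\cdot x$ need not be embedded in $M$ (witness the irrational line in the $2$-torus); the right replacement is the ``countably many plaques'' statement, whose proof is precisely the local structure theory of orbits of smooth actions — equivalently, Stefan--Sussmann integrability of the distribution of fundamental vector fields together with second countability of $G$. Granting this, the remainder of the argument is formal.
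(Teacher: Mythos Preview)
The paper does not give its own proof of this lemma: it is quoted verbatim from \cite[Proposition 2.3.12(i)]{OrtegaRatiu2004} and used as a black box in the proof of Lemma~\ref{lemma:mommappullback}. There is therefore nothing in the paper to compare your argument against.

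Your proof is the standard one and is essentially correct. One small point worth tightening: in Step~1 you equip $G\cdot x$ with the homogeneous-space structure coming from $G/G_x$, whereas the flattened chart and the assertion that ``each plaque is open in the orbit manifold $G\cdot x$'' in Steps~2--3 are properties of the \emph{leaf} structure furnished by the Stefan--Sussmann theorem. You are implicitly using that these two smooth structures on the orbit coincide. They do---both make the inclusion an injective immersion of the same dimension, and one can check directly that a local section of $\pi\colon G\to G/G_x$ composed with $\Phi_x$ lands in a single plaque and is a diffeomorphism onto an open subset of it---but a sentence to this effect would close the loop. Also, your first justification of the chart (``the rank is constant along the orbit, so it can be rectified near $y$'') is not quite enough on its own, since the rank of the fundamental-vector-field distribution need not be constant on a full $M$-neighbourhood of $y$; your parenthetical appeal to Sussmann's theorem is the honest route here, and you rightly identify it as the main obstacle.
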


Before proving our main result Proposition \ref{prop:redcoad}, we first give a supplementary lemma.

\begin{lemma}\label{lemma:mommappullback}
Let $\Phi_1, \Phi_2$ be mutually transitive actions, with equivariant momentum maps $\on{J}_1, \on{J}_2$.
Choose $x\in M$, and let $\mu_i=\on{J}_i(x),\ i=1,2$. Then the smooth map $\on{J}_2:M\rightarrow\mathfrak{g}_2^*$ restricts to a surjective submersion $\on{J}_2^{\mu_1}:\on{J}_1^{-1}(\mu_1)\rightarrow\mathcal{O}_{\mu_2}$ satisfying
\begin{equation}\label{eqn:mommappullback}
(i^{\mu_1})^*\omega = (\on{J}_2^{\mu_1})^*\omega^+_{\mathcal{O}_{\mu_2}},
\end{equation}
where $\omega^+_{\mathcal{O}_{\mu_2}}$ is the positive Kostant-Kirillov-Souriau form on the coadjoint orbit $\mathcal{O}_{\mu_2}\subset \mathfrak{g}_2^*$.
\end{lemma}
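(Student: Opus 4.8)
The plan is to construct the map $\on{J}_2^{\mu_1}$ and verify the three claims (well-definedness as a map onto $\mathcal{O}_{\mu_2}$, surjective submersion, and the pullback identity) in sequence, leaning heavily on the mutual transitivity hypotheses and on Proposition \ref{prop:coadinv}.

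First I would show that $\on{J}_2$ maps $\on{J}_1^{-1}(\mu_1)$ onto $\mathcal{O}_{\mu_2}$. Since $\on{J}_1^{-1}(\mu_1) = G_2\cdot x$ by mutual transitivity, and $\on{J}_2$ is $G_2$-equivariant, the image of $\on{J}_1^{-1}(\mu_1)$ under $\on{J}_2$ is exactly $G_2\cdot\on{J}_2(x) = \mathcal{O}_{\mu_2}$; so the corestriction $\on{J}_2^{\mu_1}\colon \on{J}_1^{-1}(\mu_1)\to\mathcal{O}_{\mu_2}$ is well-defined and surjective at the level of sets. To see it is smooth as a map into the manifold $\mathcal{O}_{\mu_2}$ (with its orbit smooth structure), I would invoke Lemma \ref{lemma:initialsub}: $\mathcal{O}_{\mu_2} = G_2\cdot\mu_2$ is an initial submanifold of $\mfg_2^*$, so the smooth map $\on{J}_2\circ i^{\mu_1}\colon\on{J}_1^{-1}(\mu_1)\to\mfg_2^*$, which lands in $\mathcal{O}_{\mu_2}$, corestricts to a smooth map. (Here one also needs that $\on{J}_1^{-1}(\mu_1) = G_2\cdot x$ is itself a manifold, again by Lemma \ref{lemma:initialsub}, so that ``smooth'' makes sense on the domain.) For the submersion property, I would use equivariance: $\on{J}_2^{\mu_1}$ intertwines the transitive $G_2$-action on $\on{J}_1^{-1}(\mu_1) = G_2\cdot x$ with the transitive $G_2$-action on $\mathcal{O}_{\mu_2} = G_2\cdot\mu_2$, and an equivariant map between transitive $G$-spaces is automatically a surjective submersion (it is, up to the identifications $G/G_x$ and $G/G_{\mu_2}$, the natural projection, using that the stabiliser $G_{2,x}$ is contained in $G_{2,\mu_2}$).

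The heart of the argument, and the step I expect to be the main obstacle, is the pullback identity \eqref{eqn:mommappullback}. The clean way is to compare tangent maps pointwise. Fix $y\in\on{J}_1^{-1}(\mu_1)$; since $G_2$ acts transitively on this fibre, every tangent vector there is of the form $\xi_M(y)$ for $\xi\in\mfg_2$, where $\xi_M$ is the infinitesimal generator. On one hand, $\omega(\xi_M(y),\eta_M(y))$: because $\on{J}_2$ is an equivariant momentum map, $\iota_{\xi_M}\omega = d\langle\on{J}_2,\xi\rangle$, so $\omega(\xi_M(y),\eta_M(y)) = d\langle\on{J}_2,\xi\rangle(\eta_M(y)) = \tfrac{d}{dt}\big|_0\langle\on{J}_2(\exp(t\eta)\cdot y),\xi\rangle = \langle\ad^*_\eta\on{J}_2(y),\xi\rangle = \langle\on{J}_2(y),[\eta,\xi]\rangle$ (signs to be pinned down against the paper's conventions). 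On the other hand, $T_y\on{J}_2^{\mu_1}$ sends $\xi_M(y)$ to the generator $\xi_{\mathcal{O}}(\on{J}_2(y)) = -\ad^*_\xi\on{J}_2(y)$ at the point $\on{J}_2(y)\in\mathcal{O}_{\mu_2}$, and the KKS form evaluates as $\omega^+_{\mathcal{O}_{\mu_2}}(\xi_{\mathcal{O}}(\nu),\eta_{\mathcal{O}}(\nu)) = \langle\nu,[\xi,\eta]\rangle$ (with the chosen sign convention indicated by the ``$+$''). Matching these two computations gives the identity; the delicate points are getting all the sign conventions consistent (the convention for the momentum map, for $\ad^*$, and the choice of $\omega^+$ versus $\omega^-$ on the coadjoint orbit) and observing that both bilinear forms are genuinely well-defined on the tangent spaces despite the generators $\xi\mapsto\xi_M(y)$ and $\xi\mapsto\xi_{\mathcal{O}}(\on{J}_2(y))$ having kernels — this is automatic because in each case the kernel is the Lie algebra of the respective stabiliser and the expression $\langle\on{J}_2(y),[\eta,\xi]\rangle$ vanishes whenever $\xi$ or $\eta$ lies in the stabiliser algebra of $\on{J}_2(y)$.

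Finally I would note that equality of the two $2$-forms on all pairs of tangent vectors at every point $y$, established above, is exactly the assertion $(i^{\mu_1})^*\omega = (\on{J}_2^{\mu_1})^*\omega^+_{\mathcal{O}_{\mu_2}}$, completing the proof. One remark worth making explicit is that no properness or freeness is used anywhere: the orbit smooth structures supplied by Lemma \ref{lemma:initialsub} together with equivariance do all the work, which is the point emphasised in the surrounding text.
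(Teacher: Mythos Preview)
Your proposal is correct and follows essentially the same approach as the paper: the paper likewise obtains surjectivity from $\on{J}_1^{-1}(\mu_1)=G_2\cdot x$ plus $G_2$-equivariance of $\on{J}_2$, smoothness from the initial submanifold property of $\mathcal{O}_{\mu_2}$ (Lemma~\ref{lemma:initialsub}), the submersion property from equivariance, and then verifies \eqref{eqn:mommappullback} by the identical pointwise computation $\omega_y(\xi\cdot y,\zeta\cdot y)=\langle\on{J}_2(y),[\xi,\zeta]\rangle=(\omega^+_{\mathcal{O}_{\mu_2}})_{\on{J}_2(y)}(-\ad_\xi^*\on{J}_2(y),-\ad_\zeta^*\on{J}_2(y))$. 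Your additional remarks on stabiliser kernels and the explicit reason equivariant maps between homogeneous spaces are submersions are more detailed than the paper's version, but the argument is the same; note that Proposition~\ref{prop:coadinv} is not actually needed here despite your opening sentence.
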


\begin{proof}
Since $\on{J}_1^{-1}(\mu_1) = G_2\cdot x$ is a $G_2$-orbit, and $\on{J}_2$ is $G_2$-equivariant, the image $\on{J}_2(\on{J}_1^{-1}(\mu_1))$ equals the coadjoint orbit $\mathcal{O}_{\mu_2}$. Since $\on{J}_1^{-1}(\mu_1)$ is a submanifold and $\mathcal{O}_{\mu_2}$ is an initial submanifold (by Lemma \ref{lemma:initialsub}), the restriction $\on{J}_2^{\mu_1}:\on{J}_1^{-1}(\mu_1)\rightarrow \mathcal{O}_{\mu_2}$ is smooth. By equivariance of $\on{J}_2$, it follows that $\on{J}_2^{\mu_1}$ is a submersion.

Now taking $\xi, \zeta \in\mathfrak{g}_2$, and using the equivariance of $\on{J}_2$, we have
\begin{align*}
\omega_x(\xi\cdot x, \zeta\cdot x) &= d_x\langle \on{J}_2,\xi\rangle (\zeta\cdot x) 
= \zeta\cdot x\, \langle\on{J}_2,\xi\rangle = \langle -\ad^*_{\zeta}\on{J}_2(x),\xi\rangle \\
&= \langle \on{J}_2(x), [\xi,\zeta]\rangle = (\omega_{\mathcal{O}_{\mu_2}}^+)_{\on{J}_2(x)}(-\ad_\xi^*\on{J}_2(x), -\ad_\zeta^*\on{J}_2(x)).
\end{align*}
Again using equivariance of $\on{J}_2$, plus the fact that $\on{J}_1^{-1}(\mu_1)$ is a $G_2$-orbit, gives \eqref{eqn:mommappullback}.
\end{proof}

\begin{proposition}\label{prop:redcoad}
Let $\Phi_1, \Phi_2$ be mutually transitive actions, with equivariant momentum maps $\on{J}_1, \on{J}_2$.
Then any reduced space under the $G_1$-action is symplectomorphic to a coadjoint orbit in $\mathrm{J}_2(M)\subset\mathfrak{g}_2^*$, and similarly with 1 and 2 switched. Explicitly, 
for $x\in M$,
\[
M_{\mathrm{J}_1(x)} \simeq \mathcal{O}_{\mathrm{J}_2(x)}, \qquad M_{\mathrm{J}_2(x)}\simeq \mathcal{O}_{\mathrm{J}_1(x)},
\]
via a resp. $G_2$- and $G_1$-equivariant symplectomorphism.
\end{proposition}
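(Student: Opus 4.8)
The plan is to build the symplectomorphism $M_{\on{J}_1(x)} \simeq \mathcal{O}_{\on{J}_2(x)}$ directly out of the surjective submersion $\on{J}_2^{\mu_1}:\on{J}_1^{-1}(\mu_1)\to\mathcal{O}_{\mu_2}$ produced in Lemma \ref{lemma:mommappullback}, and then observe that the same argument with $1$ and $2$ interchanged yields the other statement. First I would identify the fibres of $\on{J}_2^{\mu_1}$. For $y\in\on{J}_1^{-1}(\mu_1)$, the fibre $(\on{J}_2^{\mu_1})^{-1}(\on{J}_2(y))$ is $\on{J}_2^{-1}(\on{J}_2(y))\cap\on{J}_1^{-1}(\mu_1)$; by mutual transitivity the first set is $G_1\cdot y$, and since $\Phi_1$ preserves $\on{J}_1$-level sets (as $\on{J}_1$ is $G_1$-equivariant and $\mu_1$ would need to be fixed — more carefully, one uses that $G_1$ permutes the fibres of $\on{J}_1$ and that $y, g_1\cdot y$ both lie in $\on{J}_1^{-1}(\mu_1)$ forces $g_1\in G_{1,\mu_1}$), the intersection is exactly $G_{1,\mu_1}\cdot y$. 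Hence the fibres of $\on{J}_2^{\mu_1}$ are precisely the $G_{1,\mu_1}$-orbits in $\on{J}_1^{-1}(\mu_1)$, which are by definition the fibres of $\pi^{\mu_1}:\on{J}_1^{-1}(\mu_1)\to M_{\mu_1}$.

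Next I would use this coincidence of fibres to transfer structure. Since $\on{J}_2^{\mu_1}$ is a surjective submersion whose fibres are exactly the $G_{1,\mu_1}$-orbits, it descends through the quotient: there is a unique bijection $\varphi:M_{\mu_1}\to\mathcal{O}_{\mu_2}$ with $\varphi\circ\pi^{\mu_1}=\on{J}_2^{\mu_1}$. This simultaneously endows $M_{\mu_1}$ with a smooth manifold structure (the unique one making $\pi^{\mu_1}$ a submersion, inherited via $\varphi$ from $\mathcal{O}_{\mu_2}$) — this is the point where mutual transitivity, rather than properness, does the work promised in the subsection's preamble — and makes $\varphi$ a diffeomorphism. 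Equivariance of $\varphi$ with respect to the residual $G_2$-action on $M_{\mu_1}$ (descended from $\Phi_2$ on $\on{J}_1^{-1}(\mu_1)$, which commutes with $\Phi_1$ and hence with the $G_{1,\mu_1}$-action defining the quotient) and the coadjoint $G_2$-action on $\mathcal{O}_{\mu_2}$ follows from $G_2$-equivariance of $\on{J}_2$.

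For the symplectic statement, I would note that $M_{\mu_1}$ carries the reduced form $\omega_{M_{\mu_1}}$ characterised by $(\pi^{\mu_1})^*\omega_{M_{\mu_1}}=(i^{\mu_1})^*\omega$ — and one must check this $\omega_{M_{\mu_1}}$ is well-defined, i.e. that $(i^{\mu_1})^*\omega$ is basic for $\pi^{\mu_1}$; but by Lemma \ref{lemma:mommappullback} we have $(i^{\mu_1})^*\omega=(\on{J}_2^{\mu_1})^*\omega^+_{\mathcal{O}_{\mu_2}}=(\pi^{\mu_1})^*\varphi^*\omega^+_{\mathcal{O}_{\mu_2}}$, which is manifestly basic, so $\omega_{M_{\mu_1}}$ exists and equals $\varphi^*\omega^+_{\mathcal{O}_{\mu_2}}$. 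Since $\varphi$ is a diffeomorphism, $\omega_{M_{\mu_1}}$ is a genuine symplectic form and $\varphi$ is a symplectomorphism onto $(\mathcal{O}_{\mu_2},\omega^+_{\mathcal{O}_{\mu_2}})$. The main obstacle — really the only subtle point — is the fibre identification in the first step: one needs care that $g_1\cdot y\in\on{J}_1^{-1}(\mu_1)$ with $y\in\on{J}_1^{-1}(\mu_1)$ indeed forces $g_1$ to fix $\mu_1$ coadjointly, so that the fibres of $\on{J}_2^{\mu_1}$ are $G_{1,\mu_1}$-orbits and not merely $G_1$-orbit intersections; everything downstream is then formal, using the initial submanifold property (Lemma \ref{lemma:initialsub}) to guarantee the relevant maps between these spaces are smooth in both directions.
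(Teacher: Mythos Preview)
Your proposal is correct and follows essentially the same route as the paper: identify the fibres of $\on{J}_2^{\mu_1}$ with $(G_1)_{\mu_1}$-orbits (the paper phrases this as $(G_1)_{\on{J}_1(y)}\cdot y = G_1\cdot y\cap G_2\cdot y$, which is the same thing once one uses $G_2\cdot y=\on{J}_1^{-1}(\mu_1)$), transport the smooth structure from $\mathcal{O}_{\mu_2}$ along the resulting bijection, and then invoke Lemma~\ref{lemma:mommappullback} to match the reduced form with $\omega^+_{\mathcal{O}_{\mu_2}}$. Your argument for the symplectic step is slightly more economical --- you observe directly that $(i^{\mu_1})^*\omega$ is basic because it factors as $(\pi^{\mu_1})^*\varphi^*\omega^+_{\mathcal{O}_{\mu_2}}$, whereas the paper first invokes the Marsden--Weinstein--Meyer construction to produce $\omega_{M_{\mu_1}}$ and then separately identifies it --- but this is a difference of packaging, not of substance.
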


\begin{proof} 
Again choose $x\in M$, and let $\mu_i=\on{J}_i(x),\ i=1,2$. By equivariance of $\on{J}_1$ and $\on{J}_2$ and the mutual transitivity property, it is not difficult to show that for any $y\in\on{J}_1^{-1}(\mu_1)$,
\[
(G_1)_{\on{J}_1(y)}\cdot y = (G_2)_{\on{J}_2(y)}\cdot y = G_1\cdot y\cap G_2\cdot y.
\]
Hence the fibres of the restrictions $\pi_1^{\mu_1}:\on{J}_1^{-1}(\mu_1)\rightarrow M_{\mu_1}$ and $\on{J}_2^{\mu_1}:\on{J}_1^{-1}(\mu_1)\rightarrow \mathcal{O}_{\mu_2}$ agree, and we get a bijection $\chi:M_{\mu_1}\rightarrow \mathcal{O}_{\mu_2}$ making the following diagram commute.
\begin{equation}\label{diag:redmom}
\begin{diagram}
&&\on{J}_1^{-1}(\mu_1)\\
&\ldTo^{\pi_1^{\mu_1}} &&\rdTo^{\on{J}_2^{\mu_1}}\\
M_{\mu_1} && \rTo^\chi && \mathcal{O}_{\mu_2}
\end{diagram}
\end{equation}

Pulling the smooth structure on $\mathcal{O}_{\mu_2}$ back to $M_{\mu_1}$ via $\chi$ implies that $\pi_1^{\mu_1}$ is also a smooth submersion. The fibres $(G_1)_{\mu_1}\cdot y$ of $\pi_1^{\mu_1}$ are integral manifolds of the degeneracy directions $(\mathfrak{g}_1)_{\mu_1}\cdot y$ of the restriction $(i^{\mu_1})^*\omega$. Then the usual Marsden-Weinstein-Meyer construction implies the existence of a reduced symplectic structure $\omega_{M_{\mu_1}}$ on  $M_{\mu_1}$ satisfying 
\begin{equation}\label{omega1}
(i^{\mu_1})^*\omega = (\pi_1^{\mu_1})^*\omega_{M_{\mu_1}}.
\end{equation}

By \eqref{eqn:mommappullback} and commutativity of diagram \eqref{diag:redmom}, we have
\begin{equation}\label{omega2}
(i^{\mu_1})^*\omega =  (\pi_1^{\mu_1})^*(\chi)^* \omega_{\mathcal{O}_{\mu_2}}^+.
\end{equation}
Then combining \eqref{omega1} and \eqref{omega2},
\[
(\pi_1^{\mu_1})^*\omega_{M_{\mu_1}} =(\pi_1^{\mu_1})^*(\chi)^*\omega_{\mathcal{O}_{\mu_2}}^+,
\]
and since $\pi_1^{\mu_1}$ is a surjective submersion,
\[
\omega_{M_{\mu_1}} = (\chi)^*\omega_{\mathcal{O}_{\mu_2}}^+,
\]
i.e., $\chi$ is a symplectomorphism. Since the $G_1$- and $G_2$-actions on $M$ commute, the $G_2$-action drops to $M_{\mu_1}$. Then commutativity of the diagram \eqref{diag:redmom} and $G_2$-equivariance of $\on{J}_2^{\mu_1}$ implies that $\chi$ is $G_2$-equivariant.

A similar argument shows that $M_{\mu_2} \subset M/G_2$ is symplectomorphic to $\mathcal{O}_{\mu_1}\subset \mfg_1^*$.
\end{proof}

The map $\chi$ in the above proof has a natural interpretation: using the identity $(i^{\mu_1})^*\omega = (\pi_1^{\mu_1})^*\omega_{M_{\mu_1}}$ it is easily shown that $\chi$ is the momentum map of the 
induced $G_2$-action on $M_{\mu_1}$.

\subsection{The relation to Lie-Weinstein dual pairs}

In this subsection we make contact with the notion of dual pair in the Weinstein's original sense \cite{Weinstein1983}. 

\begin{definition}\cite[Definition 11.1.1]{OrtegaRatiu2004}
Let $M$ be a symplectic manifold, and $P_1, P_2$ Poisson manifolds. A pair of Poisson maps
\begin{equation*}
P_1\xleftarrow{\J_1}M\xrightarrow{\J_2}P_2,
\end{equation*}
is called a \emph{Lie-Weinstein dual pair} if 
$\J_1, \J_2$ are surjective submersions satisfying 
\[
(\ker T\J_1)^\omega =  \ker T\J_2.
\]
\end{definition}

\begin{proposition}\label{prop:dualpairrank}
Let $\Phi_1, \Phi_2$ be mutually transitive actions on $M$, and suppose the momentum maps $\on{J}_1, \on{J}_2$ have constant rank. Then 
$\on{J}_1(M), \on{J}_2(M)$ can be given smooth structures such that $\on{J}_1(M)\xleftarrow{\J_1}M\xrightarrow{\J_2}\on{J}_2(M)$ is a Lie-Weinstein dual pair.
\end{proposition}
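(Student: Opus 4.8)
The plan is to show that under the constant-rank hypothesis, the images $\on{J}_1(M)$ and $\on{J}_2(M)$ are immersed (indeed initial) submanifolds of $\mfg_1^*$ and $\mfg_2^*$ onto which $\on{J}_1$ and $\on{J}_2$ corestrict to surjective submersions, and then to verify the symplectic orthogonality of the fibres using mutual transitivity. First I would invoke the constant-rank theorem: a smooth map of constant rank factors locally as a submersion followed by an immersion, so $\on{J}_i(M)$ carries a canonical smooth manifold structure (it is a union of $G_i$-coadjoint orbits by equivariance, and constant rank forces these orbits to have locally constant dimension) for which the corestriction $\J_i \colon M \to \on{J}_i(M)$ is a surjective submersion and the inclusion $\on{J}_i(M)\hookrightarrow\mfg_i^*$ is an injective immersion. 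I would also note that each $\on{J}_i(M)$ is then an initial submanifold, so that the coadjoint orbits $\mathcal{O}_{\mu_i}$ sitting inside $\on{J}_i(M)$ are submanifolds of $\on{J}_i(M)$ in the expected way, consistent with Lemma~\ref{lemma:initialsub}.

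Next I would equip $\on{J}_1(M)$ and $\on{J}_2(M)$ with Poisson structures. Since $\J_i$ is now a surjective submersion and $\on{J}_i\colon M\to\mfg_i^*$ is Poisson (being an equivariant momentum map for a Hamiltonian action, with respect to the Lie-Poisson structure), the Poisson bracket on $\mfg_i^*$ restricts to one on the immersed submanifold $\on{J}_i(M)$ — concretely, the symplectic leaves of $\mfg_i^*$ are the coadjoint orbits, and $\on{J}_i(M)$ is a union of such orbits, so it inherits a Poisson structure with the coadjoint orbits as its symplectic leaves. With this structure, $\J_i$ is a Poisson surjective submersion. (Alternatively, one can push the Poisson bracket on $C^\infty(M)^{G_2}$ down via $\J_1$; the two descriptions agree.)

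Finally, I would check the duality condition $(\ker T\J_1)^\omega = \ker T\J_2$. Since $\J_1$ and $\on{J}_1$ differ only by the inclusion of $\on{J}_1(M)$, which is an immersion, we have $\ker T_x\J_1 = \ker T_x\on{J}_1$ for every $x$, and likewise for $\J_2$. By mutual transitivity the fibre $\on{J}_1^{-1}(\on{J}_1(x))$ equals the $G_2$-orbit $G_2\cdot x$, so $\ker T_x\on{J}_1 = \mathfrak{g}_2\cdot x$, the tangent space to the $G_2$-orbit; similarly $\ker T_x\on{J}_2 = \mathfrak{g}_1\cdot x$. The standard momentum-map identity $\omega_x(\xi_M(x), v) = \langle d_x\on{J}_2,\xi\rangle(v) = d_x\langle\on{J}_2,\xi\rangle(v)$ for $\xi\in\mathfrak{g}_2$ shows that $v\in(\mathfrak{g}_2\cdot x)^\omega$ iff $d_x\langle\on{J}_2,\xi\rangle(v)=0$ for all $\xi$, i.e.\ iff $v\in\ker T_x\on{J}_2 = \mathfrak{g}_1\cdot x$; hence $(\ker T_x\J_1)^\omega = (\mathfrak{g}_2\cdot x)^\omega = \mathfrak{g}_1\cdot x = \ker T_x\J_2$, as required.

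The main obstacle I expect is not any single deep step but rather the bookkeeping around the non-embedded nature of $\on{J}_i(M)$: one must be careful that the smooth structure produced by the constant-rank theorem is globally well-defined (the local submersion factors must patch), that it makes $\on{J}_i(M)$ an initial submanifold so the corestriction $\J_i$ is genuinely smooth and a submersion, and that the Poisson structure is well-defined on this possibly-not-embedded submanifold. Constant rank is exactly what rules out the pathologies (jumps in fibre dimension, accumulation of orbits of varying dimension), so the argument is essentially an application of the rank theorem plus the orbit-equals-fibre dictionary from mutual transitivity, together with the momentum-map computation already carried out in the proof of Lemma~\ref{lemma:mommappullback}.
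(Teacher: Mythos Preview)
Your overall strategy matches the paper's: verify $(\ker T\J_1)^\omega=\ker T\J_2$ via the chain $\ker T_x\on{J}_1=\mathfrak g_2\cdot x$ and $(\mathfrak g_2\cdot x)^\omega=\ker T_x\on{J}_2$, and build a smooth structure on $\on{J}_i(M)$ from constant-rank charts. The duality computation is fine (though note that the equality $\ker T_x\on{J}_1=T_x(G_2\cdot x)$ already uses constant rank, since one needs the fibre to be a submanifold with the expected tangent space).

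The gap is in your treatment of the smooth structure on $\on{J}_i(M)$. You write that ``constant rank is exactly what rules out the pathologies,'' but this is not so: the image of a constant-rank map can have \emph{multiple points}, where two local immersion pieces over the same image point are incompatible (think of an immersed figure-eight, which has constant rank $1$). The constant-rank theorem gives you, for each $x\in M$, a chart on a neighbourhood $\on{J}_1(U_x)$ of $\on{J}_1(x)$; the problem is to show that charts coming from $U_x$ and $U_{x'}$ with $\on{J}_1(x)=\on{J}_1(x')$ but $U_x\cap U_{x'}=\emptyset$ are compatible. Constant rank alone gives you nothing here. What the paper does---and what your sketch does not supply---is use mutual transitivity at exactly this point: since $x$ and $x'$ lie in the same $\on{J}_1$-fibre, there is $g_2\in G_2$ with $x'=\Phi_2(g_2,x)$, and because $\on{J}_1$ is $G_2$-invariant one can replace $(U_x,\phi_x)$ by the translated chart $\bigl((\Phi_2)_{g_2}(U_x),\,\phi_x\circ(\Phi_2)_{g_2^{-1}}\bigr)$, which now overlaps $U_{x'}$, reducing to the easy case. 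Your ``union of coadjoint orbits of locally constant dimension'' remark does not substitute for this, since such a union need not be a manifold either. So the ``orbit-equals-fibre dictionary'' you mention in passing is not peripheral bookkeeping: it is the mechanism that makes the charts patch, and you should make that explicit.
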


\begin{proof}
Since the maps $\on{J}_i:M\rightarrow \mfg_i^*$ are equivariant, they are Poisson (\cite[Proposition 12.4.1]{MarsdenRatiu1999}), and since $\on{J}_i(M)$ is a union of symplectic leaves, this property still holds when the $\on{J}_i$ are corestricted to their images.

Since $\on{J}_1^{-1}(\on{J}_1(x)) = G_2\cdot x$, we have that
\[
\ker T_x\on{J}_1 = T_x(\on{J}_1^{-1}(\on{J}_1(x))) = \mfg_2\cdot x,
\]
the first equality being a consequence of the constant rank property (see for example the discussion on page 8 of \cite{OrtegaRatiu2004}).
Then
\[
(\ker T_x\on{J}_1)^\omega = (\mathfrak{g}_2\cdot x)^\omega = \ker T_x\on{J}_2,
\]
where the second equality is a standard result. So the dual pair condition holds.

We define a smooth structure on $\on{J}_1(M)$ as follows: let $y\in \on{J}_1(M)$, and $x\in\on{J}_1^{-1}(y)$. Since $\on{J}_1$ has constant rank, there exist local charts $(U_x,\phi_x)$ about $x$ and $(V_y, \psi_y)$ about $y$ with respect to which $\on{J}_1$ takes the form of a projection, i.e., 
\begin{equation}\label{eqn:J1proj1}
\psi_y\circ\on{J}_1\circ\phi_x^{-1}(a_1,\ldots, a_m) = (a_1,\ldots, a_k, 0,\ldots,0),
\end{equation}
with $k$ independent of $x, y$. The first $k$ components of $\psi_y$, restricted to $W_y = \on{J}_1(U_x)$, defines a local coordinate chart $\eta_y:W_y\to \mathbb{R}^k$ about $y$. To show any two such charts are compatible, consider charts $(W_y, \eta_y), (W_{y'},\eta_{y'})$, with $W_y\cap W_{y'} = \on{J}_1(U_x)\cap \on{J}_1(U_{x'}) \ne \emptyset$. By constructing a shifted chart $((\Phi_2)_{g_2}(U_x), \phi_x\circ (\Phi_2)_{g_2^{-1}})$ if necessary, we can without loss of generality assume $U_x\cap U_{x'} \ne \emptyset$. 
From \eqref{eqn:J1proj1}, the level sets of $\on{J}_1\vert_{U_x\cap U_{x'}}$ are expressed as $(a_1, \ldots, a_k) = \mathrm{const.}$ and $(a_1', \ldots, a_k') = \mathrm{const.}$ in respective local coordinates, and so the first $k$ components of the (smooth) transition function $\phi_{x'}\circ \phi_x^{-1}$ only depend on the coordinates $(a_1,\ldots, a_k)$.
From
\begin{equation}\label{eqn:J1proj2}
\eta_y \circ \on{J}_1\circ\phi_x^{-1}(a_1,\ldots,a_m) = (a_1,\ldots, a_k), \qquad \eta_{y'} \circ \on{J}_1\circ\phi_{x'}^{-1}(a'_1,\ldots,a'_m) = (a'_1,\ldots, a'_k),
\end{equation}
we deduce that 
\[
\eta_{y'}\circ \eta_y^{-1}(a_1,\ldots, a_k) = (\eta_{y'}\circ \on{J}_1\circ \phi_{x'}^{-1})(\phi_{x'}\circ\phi_x^{-1}(a_1,\ldots, a_k)) =(f_1(a_1,\ldots, a_k), \ldots, f_k(a_1,\ldots, a_k))
\]
for smooth functions $f_1,\ldots, f_k$.
From either of equations \eqref{eqn:J1proj2}, $\on{J}_1:M\to \on{J}_1(M)$ is a surjective submersion with respect to this smooth structure. A similar argument holds for $\on{J}_2:M\to \on{J}_2(M)$.
\end{proof}

\begin{remark}
In general, the image of a constant rank map can exhibit so-called \emph{multiple points}, i.e., points where the tangent space to the image cannot be defined consistently\textemdash see \cite[Appendix 1, Section 1.8]{LibermannMarle1987} for a discussion. The latter part of the proof of Proposition \ref{prop:dualpairrank} essentially shows that as a consequence of the fact that level sets of $\on{J}_1$ are $G_2$-orbits, such multiple points do not exist for $\on{J}_1(M)$.
\end{remark}

\begin{remark}
The smooth structures on $\on{J}_i(M)$ in Proposition \ref{prop:dualpairrank} are necessarily unique \cite[Theorem 4.31]{Lee2013}, and the $\on{J}_i(M)$ are immersed submanifolds of $\mfg_i^*$.
\end{remark}

\begin{remark}\label{remark:constantrank}
Careful examination of the  proof of Proposition \ref{prop:dualpairrank} shows that it is sufficient to know that \emph{one} of the momentum maps has constant rank. From this, the Lie-Weinstein condition $(\ker T\on{J}_1)^\omega = \ker T\on{J}_2$ can be deduced, from which it follows that the other momentum map also has constant rank.
\end{remark}

\begin{example}
The constant rank condition on $\on{J}_1, \on{J}_2$ is necessary for proving that mutual transitivity implies the Lie-Weinstein condition. For an example where the Lie-Weinstein condition fails to hold, consider $\mathbb{R}^2$ with its usual symplectic structure, and $G_1=G_2=\on{SO}(2)$ with its usual action on $\mathbb{R}^2$. This action is Hamiltonian, with momentum map 
$\on{J}(x,y) = \frac{1}{2}(x^2+y^2)$ (on identifying $\mathfrak{so}(2)$ with $\mathbb{R}$). The action trivially commutes with itself, and the $\on{SO}(2)$-orbits agree with the level sets of the momentum map. However 
\[
\ker\,T_{(x,y)}\on{J} = 
\begin{cases} 
\mathbb{R} (y, -x) & (x,y)\ne (0,0) \\
\mathbb{R}^2 & (x,y) = (0,0)
\end{cases},
\]
and so
\[ 
\left(\ker\, T_{(0,0)}\on{J}\right)^\omega = \lbrace(0,0)\rbrace \ne \mathbb{R}^2 =  \ker\, T_{(0,0)}\on{J}.
\]
So the Lie-Weinstein condition fails to hold at the origin. 
\end{example}

For pairs of group actions, the Lie-Weinstein condition is closely related with the notion of \emph{mutually completely orthogonal} actions\textemdash see \cite{LibermannMarle1987} for further details.

We conclude with a standard useful criterion for deducing that a momentum map has constant rank.

\begin{lemma}{\rm\cite[Corollary 4.5.13]{OrtegaRatiu2004}}\label{lemma:free}
If $\Phi$ is a (locally) free Hamiltonian $G$-action, then $\on{J}$ is a submersion, and in particular has constant rank.
\end{lemma}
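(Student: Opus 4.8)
The statement to prove is that a (locally) free Hamiltonian action $\Phi$ of $G$ on $M$ has a submersive momentum map $\J$. The plan is to work pointwise: fix $x \in M$ and show that $T_x\J : T_xM \to \mfg^*$ is surjective, or equivalently that its transpose (pulled back along the pairing) is injective. The natural object to look at is the composite $\mfg \xrightarrow{\ \xi \mapsto \xi_M(x)\ } T_xM \xrightarrow{T_x\J} \mfg^*$, which by the defining property of the momentum map $\iota_{\xi_M}\omega = d\langle \J, \xi\rangle$ sends $\zeta \in \mfg$ to the functional $\xi \mapsto \omega_x(\xi_M(x), \zeta_M(x))$.

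The key steps, in order: First, I would recall the infinitesimal consequence of local freeness — that the isotropy algebra $\mfg_x = \{\xi \in \mfg : \xi_M(x) = 0\}$ is trivial, so the map $\xi \mapsto \xi_M(x)$ is injective and $\mfg \cdot x = \{\xi_M(x) : \xi \in \mfg\}$ has dimension $\dim \mfg$. Second, I would invoke the standard identity $(\mfg \cdot x)^\omega = \ker T_x\J$ (used already in the proof of Proposition \ref{prop:dualpairrank}): taking symplectic orthogonals and using $\dim V + \dim V^\omega = \dim M$ for any subspace $V$ of a symplectic vector space, this gives $\dim(\ker T_x\J) = \dim M - \dim(\mfg\cdot x) = \dim M - \dim\mfg$. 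Third, by the rank–nullity theorem the image of $T_x\J$ has dimension $\dim M - \dim(\ker T_x\J) = \dim\mfg$, which is exactly $\dim \mfg^*$; hence $T_x\J$ is surjective. Since $x$ was arbitrary, $\J$ is a submersion, and a submersion trivially has constant (maximal) rank.

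I do not expect a genuine obstacle here, since every ingredient is standard linear symplectic algebra plus the basic properties of momentum maps already in use in the paper; the only point requiring a word of care is the identity $(\mfg\cdot x)^\omega = \ker T_x\J$, whose proof is precisely the pointwise computation $v \in \ker T_x\J \iff \langle T_x\J(v), \xi\rangle = 0 \ \forall \xi \iff \omega_x(\xi_M(x), v) = 0 \ \forall \xi \iff v \in (\mfg\cdot x)^\omega$, read directly off $d\langle\J,\xi\rangle = \iota_{\xi_M}\omega$. One should also note that ``locally free'' suffices because only the infinitesimal statement $\mfg_x = 0$ is used, not global freeness of the action.
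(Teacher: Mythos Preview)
Your argument is correct and is essentially the standard proof of this fact. Note, however, that the paper does not actually prove this lemma: it is simply quoted from \cite[Corollary 4.5.13]{OrtegaRatiu2004} and stated without proof. So there is no ``paper's own proof'' to compare against; your write-up supplies exactly the kind of argument one would expect to find behind the citation, using the identity $(\mfg\cdot x)^\omega = \ker T_x\J$ (already invoked in the proof of Proposition~\ref{prop:dualpairrank}) together with local freeness to force $\dim(\mfg\cdot x) = \dim\mfg$ and hence surjectivity of $T_x\J$ by rank--nullity.
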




\section{The \texorpdfstring{$(\Un,\Um)$}{(U(n),U(m))} actions on \texorpdfstring{$\MnmC$}{M(nxm,C)}}\label{s3}

Following \cite{BalleierWurzbacher2012}, in this section we consider the natural Hamiltonian actions of $\Un$ and $\Um$ on $\MnmC$. We show that these actions are mutually transitive, and consequently deduce the coadjoint orbit and reduced space correspondences (Corollary \ref{coro} and Proposition \ref{prop:redcoad}). We note that Balleier and Wurzbacher instead derive these properties as a consequence of the \emph{symplectic Howe condition} \cite[Definition 2.4]{BalleierWurzbacher2012} on the actions\textemdash see \cite[Section 5.1]{BalleierWurzbacher2012} for details.

In what follows, we view elements of $\MnmC$ either as matrices or as linear maps from $\mathbb{C}^m$ to $\mathbb{C}^n$, depending on context.

\subsection{Commuting Hamiltonian actions}

First, note that $\MnmC$ is a complex inner product space, with Hermitian inner product
\[
(E,F) = \Tr(E^\dagger F).
\]
The imaginary part of this inner product defines a linear symplectic form
\[
\Omega(E,F) := \on{Im}\Tr(E^\dagger F) = \frac{1}{2i}\Tr(E^\dagger F - F^\dagger E),
\]
and $\MnmC$ is a linear K\"ahler space, with obvious complex structure.
%
It is straightforward to show that the natural left $\Un$- and right $\Um$-actions act symplectically on $\MnmC$, considered as a symplectic manifold.
In fact, these actions are Hamiltonian, and we can easily compute corresponding momentum maps.

\begin{proposition}\phantomsection\label{prop:unummomentummaps}
\begin{enumerate}[{\rm(i)}]
\item
A momentum map $\on{J}_L:\MnmC\rightarrow \un^*$ corresponding to the left $\Un$-action is given by
\[
\langle \mathrm{J}_L(E), \zeta\rangle = \frac{1}{2}\Omega(\zeta E,E).
\]
\item
A momentum map $\on{J}_R:\MnmC\rightarrow \um^*$ corresponding to the right $\Um$-action is given by
\[
\langle \on{J}_R(E),\xi \rangle = \frac{1}{2}\Omega(E\xi,E).
\]
\end{enumerate}
\end{proposition}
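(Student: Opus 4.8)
The plan is to verify directly that the stated formulas satisfy the defining equation of a momentum map; since $\MnmC$ is a vector space on which the two actions are linear, this reduces to a short algebraic identity. For part (i), the fundamental vector field of $\zeta\in\un$ for the left $\Un$-action is $\zeta_{\MnmC}(E)=\zeta E$, so I must check that the quadratic function $E\mapsto\langle\on{J}_L(E),\zeta\rangle=\tfrac12\Omega(\zeta E,E)$ has differential $\iota_{\zeta_{\MnmC}}\Omega$, i.e.\ $d_E\langle\on{J}_L,\zeta\rangle(F)=\Omega(\zeta E,F)$ for every $F\in\MnmC$. Because the function is quadratic and $\Omega$ is bilinear and constant, $d_E\langle\on{J}_L,\zeta\rangle(F)=\tfrac12\big(\Omega(\zeta F,E)+\Omega(\zeta E,F)\big)$, so the whole computation collapses to the symmetry identity $\Omega(\zeta F,E)=\Omega(\zeta E,F)$ for $\zeta\in\un$; the prefactor $\tfrac12$ is exactly what absorbs the factor $2$ produced by differentiating a quadratic form.

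To establish that identity I would expand $\Omega(A,B)=\on{Im}\Tr(A^\dagger B)$ and use the skew-Hermitian condition $\zeta^\dagger=-\zeta$, obtaining $\Omega(\zeta F,E)=-\on{Im}\Tr(F^\dagger\zeta E)$ and $\Omega(\zeta E,F)=-\on{Im}\Tr(E^\dagger\zeta F)$. Then, since $(E^\dagger\zeta F)^\dagger=-F^\dagger\zeta E$ and $\Tr(X^\dagger)=\overline{\Tr X}$, one has $\Tr(F^\dagger\zeta E)=-\overline{\Tr(E^\dagger\zeta F)}$, and taking imaginary parts (using $\on{Im}\bar z=-\on{Im}z$) gives $\on{Im}\Tr(F^\dagger\zeta E)=\on{Im}\Tr(E^\dagger\zeta F)$, which is precisely the desired equality. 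Part (ii) is handled in the same way: the fundamental vector field of $\xi\in\um$ for the right $\Um$-action is $\xi_{\MnmC}(E)=E\xi$, so the momentum map condition reduces to $\Omega(F\xi,E)=\Omega(E\xi,F)$ for $\xi\in\um$, which follows from the analogous trace manipulation, now invoking cyclicity of the trace to bring $\xi^\dagger=-\xi$ into play.

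I do not expect a genuine obstacle here: the verification is routine, and the only point demanding care is the bookkeeping of complex conjugation when moving $\zeta^\dagger=-\zeta$ (resp.\ $\xi^\dagger=-\xi$) past the trace and the imaginary part. I would also note in passing that both $\on{J}_L$ and $\on{J}_R$ are automatically coadjoint-equivariant, being the restrictions to $\un$, resp.\ $\um$, of the standard quadratic momentum map for the linear symplectic group of $(\MnmC,\Omega)$; this equivariance is not needed for the present statement but will be used in the mutual-transitivity arguments of the following sections.
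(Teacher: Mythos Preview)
Your proposal is correct. The paper's own proof is a one-line citation of the general formula for the momentum map of a linear symplectic action on a symplectic vector space \cite[Section 12.4, Example (a)]{MarsdenRatiu1999}; your direct verification simply unpacks that standard result, so the two approaches coincide in substance.
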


\begin{proof}
Both results follow from the general expression for the momentum map of a linear symplectic action on a symplectic vector space\textemdash see for example \cite[Section 12.4, Example (a)]{MarsdenRatiu1999}.
\end{proof}

\begin{remark}
Both momentum maps are easily seen to be equivariant,
\[
\on{J}_L(UE) = \Ad_{U^{-1}}^*(\on{J}_L(E)) \qquad\textrm{and}\qquad \on{J}_R(EV) = \Ad_V^*(\on{J}_R(E)),
\]
hence Poisson with respect to the (+) 
Lie-Poisson structure on $\un^*$, respectively (--)
Lie-Poisson structure on $\um^*$.
\end{remark}

\subsection{Lie algebra-valued momentum maps}

Given a (real) Lie subalgebra $\mathfrak{g}\subset\mathfrak{gl}(N,\mathbb{C})$, we define the trace form $\llangle\cdot,\cdot\rrangle:\mathfrak{g}\times\mathfrak{g}\rightarrow \mathbb{R}$ by
\begin{equation}\label{traceform}
\llangle \xi, \zeta \rrangle = \on{Re} \Tr(\xi\zeta).
\end{equation}
If $\mathfrak{g}$ is invariant under conjugate transpose, then $\llangle\cdot,\cdot\rrangle$ is non-degenerate, since 
\[
\llangle \xi,\xi^\dagger\rrangle = \on{Re}\, \on{Tr}(\xi\xi^\dagger)  >0 \textrm{ for }\xi\ne 0.
\]
We can use the non-degeneracy of $\llangle\cdot,\cdot\rrangle$ to translate the momentum maps $\on{J}_L$ and $\on{J}_R$ from Proposition \ref{prop:unummomentummaps} into Lie algebra-valued momentum maps. We note in particular that if $\mfg\subset \gl(N,\mathbb{C})$ integrates to $G\subset \GL(n,\mathbb{C})$, then the identification $\mfg^*\simeq \mfg$ provided by the trace form is $G$-equivariant.

\begin{proposition}\phantomsection\label{prop:lamomUnUm}
\begin{enumerate}[{\rm (i)}]
\item
The Lie algebra-valued momentum map $\on{j}_L:\MnmC\rightarrow\un$ is 
\[
\on{j}_L(E) = \frac{i}{2}EE^\dagger.
\]
\item
The Lie algebra-valued momentum map $\on{j}_R:\MnmC\rightarrow\um$ is
\[
\on{j}_R(E) = \frac{i}{2}E^\dagger E.
\]
\end{enumerate}
\end{proposition}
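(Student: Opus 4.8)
The plan is to unwind the definition of the Lie algebra-valued momentum map and reduce everything to a short trace computation. Recall that $\on{j}_L$ is, by definition, the composition of the momentum map $\on{J}_L:\MnmC\to\un^*$ of Proposition \ref{prop:unummomentummaps} with the isomorphism $\un^*\simeq\un$ determined by the trace form $\llangle\cdot,\cdot\rrangle$. Thus $\on{j}_L(E)$ is the unique element of $\un$ satisfying $\llangle \on{j}_L(E),\zeta\rrangle = \langle\on{J}_L(E),\zeta\rangle$ for every $\zeta\in\un$, and so the proposition amounts to two claims: that $\frac{i}{2}EE^\dagger$ genuinely lies in $\un$, and that it represents $\on{J}_L(E)$ under the trace form.

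The first claim is immediate, since $(\frac{i}{2}EE^\dagger)^\dagger = -\frac{i}{2}(EE^\dagger)^\dagger = -\frac{i}{2}EE^\dagger$, so $\frac{i}{2}EE^\dagger$ is skew-Hermitian. For the second claim I would start from
\[
\langle\on{J}_L(E),\zeta\rangle = \tfrac12\Omega(\zeta E,E) = \tfrac12\on{Im}\Tr\!\left((\zeta E)^\dagger E\right) = \tfrac12\on{Im}\Tr(E^\dagger\zeta^\dagger E),
\]
then use $\zeta^\dagger = -\zeta$ (as $\zeta\in\un$) together with cyclicity of the trace to rewrite this as $-\tfrac12\on{Im}\Tr(\zeta EE^\dagger)$, and finally convert the imaginary part to a real part via $\on{Im}(w) = \on{Re}(-iw)$, obtaining $\on{Re}\Tr\!\left(\tfrac{i}{2}EE^\dagger\zeta\right) = \llangle\tfrac{i}{2}EE^\dagger,\zeta\rrangle$, which is the desired identity. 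The argument for $\on{j}_R$ is entirely parallel: $\frac{i}{2}E^\dagger E$ is skew-Hermitian, hence in $\um$, and starting from $\langle\on{J}_R(E),\xi\rangle = \tfrac12\Omega(E\xi,E) = \tfrac12\on{Im}\Tr(\xi^\dagger E^\dagger E)$ with $\xi^\dagger=-\xi$ produces $\llangle\tfrac{i}{2}E^\dagger E,\xi\rrangle$ after the same manipulations.

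I do not anticipate any genuine obstacle here; the only point requiring care is the bookkeeping relating the imaginary part that appears in $\Omega$ to the real part that appears in $\llangle\cdot,\cdot\rrangle$, which is handled uniformly by the identity $\on{Im}(w) = \on{Re}(-iw)$ together with the skew-Hermitian condition on Lie algebra elements. The factors of $\tfrac12$ cancel cleanly against those in the definitions of $\on{J}_L$ and $\on{J}_R$, and the passage from the dual side to the Lie algebra side preserves equivariance, as already noted before the statement.
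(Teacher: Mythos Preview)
Your proof is correct and follows essentially the same route as the paper: both compute $\langle\on{J}_L(E),\zeta\rangle = \tfrac12\Omega(\zeta E,E)$, apply $\zeta^\dagger=-\zeta$ and cyclicity of the trace, and then convert $\on{Im}$ to $\on{Re}$ via multiplication by $i$ to land on $\llangle\tfrac{i}{2}EE^\dagger,\zeta\rrangle$. The only cosmetic difference is that you verify $\tfrac{i}{2}EE^\dagger\in\un$ up front rather than at the end.
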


\begin{proof}
(i) For $\zeta\in\un$, $E\in\MnmC$, 
\begin{align*}
\langle \on{J}_L(E),\zeta\rangle &= \frac{1}{2}\Omega(\zeta E,E) = \frac{1}{2} \on{Im} \Tr (E^\dagger \zeta^\dagger E) = -\frac{1}{2} \on{Im}\Tr(EE^\dagger \zeta) \qquad\textrm{using }\zeta^\dagger = -\zeta\\
&= \frac{1}{2} \on{Re}\Tr (iEE^\dagger\zeta) = \left\llangle \frac{i}{2}EE^\dagger,\zeta\right\rrangle.
\end{align*}
Since $\frac{i}{2}EE^\dagger \in \un$, 
the result follows.

\noindent (ii)
Similar.
\end{proof}

\subsection{The mutually transitive property}

\begin{proposition}\phantomsection\label{prop:UnUm}
\begin{enumerate}[{\rm (i)}]
\item
$\Un$ acts transitively on the level sets of $\on{j}_R$.
\item
$\Um$ acts transitively on the level sets of $\on{j}_L$.
\end{enumerate}
\end{proposition}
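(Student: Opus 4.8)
The plan is to prove statement (i); statement (ii) follows by a symmetric argument with the roles of $n$ and $m$ (and left/right) interchanged. The key observation is that the level set of $\on{j}_R$ through $E$ is determined by the positive semidefinite Hermitian matrix $E^\dagger E \in \MnmC$, so that $\on{j}_R(E) = \on{j}_R(F)$ if and only if $E^\dagger E = F^\dagger F$. The goal is therefore to show that $E^\dagger E = F^\dagger F$ implies $F = UE$ for some $U \in \Un$.

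First I would treat the case where $E$ has full rank $m$ (so $m \le n$), which is the situation relevant to the dual pair constructions later in the paper. Here $E^\dagger E$ is positive definite, and one can argue via polar decomposition: write $E = P H$ where $H = (E^\dagger E)^{1/2}$ is the positive definite square root and $P = E H^{-1}$ is an $n \times m$ matrix with orthonormal columns (i.e. $P^\dagger P = I_m$). Similarly $F = Q H$ with the same $H$ (since $F^\dagger F = E^\dagger E$) and $Q^\dagger Q = I_m$. It then suffices to find $U \in \Un$ with $UP = Q$: since $P, Q$ both have orthonormal columns, each extends to a unitary $n\times n$ matrix, and $U$ can be taken to map the columns of $P$ to those of $Q$ and any chosen orthonormal completion of the former to one of the latter. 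Then $UE = UPH = QH = F$, as desired. Conversely, if $F = UE$ then clearly $F^\dagger F = E^\dagger E^{} $, so the level set is exactly the $\Un$-orbit.

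For the general (not necessarily full-rank) case, I would replace the polar decomposition with the singular value decomposition: write $E = U_1 \Sigma V_1^\dagger$ and $F = U_2 \Sigma' V_2^\dagger$ with $U_1, U_2 \in \Un$, $V_1, V_2 \in \Um$, and $\Sigma, \Sigma'$ rectangular diagonal with nonnegative entries. The condition $E^\dagger E = F^\dagger F$ forces $V_1 \Sigma^\top\Sigma V_1^\dagger = V_2 \Sigma'^\top\Sigma' V_2^\dagger$; since the singular values are the square roots of the eigenvalues of $E^\dagger E$, we may arrange $\Sigma = \Sigma'$ and then $V_2^\dagger V_1$ commutes with $\Sigma^\top \Sigma$, which lets one absorb the discrepancy between $V_1$ and $V_2$ into a modification of $U_2$ up to the kernel directions. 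Then $U := U_2 U_1^{-1}$ (suitably corrected on the block corresponding to zero singular values) satisfies $UE = F$.

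The main obstacle is the bookkeeping in the degenerate case: when $E$ does not have full rank, the factor $P$ in the polar decomposition no longer has orthonormal columns and $U$ is not uniquely determined, so care is needed to check that the freedom in choosing $U$ on the kernel of $E^\dagger$ is exactly enough — and not more than enough — to match $F$. Since the later sections only use this proposition restricted to full-rank matrices $\MnmCm$ (where $m \le n$ and $\on{j}_R$ is a submersion, cf. Lemma \ref{lemma:free}), I expect the clean argument to be the full-rank polar decomposition one above, with the general case either omitted or handled briefly by the SVD remark.
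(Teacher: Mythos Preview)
Your full-rank argument via polar decomposition is correct and clean. The SVD sketch for the general case has the right idea but the last sentence is imprecise: the needed correction is not confined to the zero-singular-value block. From $W := V_2^\dagger V_1$ commuting with $\Sigma^\top\Sigma$ you get that $W$ is block-diagonal along the eigenspaces of $\Sigma^\top\Sigma$; on the nonzero block $W$ then commutes with the diagonal $\Sigma_1$ itself, so $\Sigma W^\dagger = \tilde W^\dagger \Sigma$ for a $\tilde W \in \Un$ built from that block (and the identity elsewhere), and $U = U_2 \tilde W U_1^\dagger$ works. With that said, the argument goes through.

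The paper takes a different, more elementary route that avoids both polar decomposition and SVD and treats all ranks uniformly. It reads $E^\dagger E = (E')^\dagger E'$ as saying the columns of $E$ and $E'$ have identical Gram matrices, selects a maximal linearly independent subset $\{E_{a_1},\dots,E_{a_k}\}$ of the columns of $E$, shows directly from the Gram conditions that $\{E'_{a_1},\dots,E'_{a_k}\}$ is a basis of $\on{im} E'$, defines the isometry $\on{im} E \to \on{im} E'$ by $E_{a_i}\mapsto E'_{a_i}$, and extends by an arbitrary isometry on orthogonal complements. This gives a single argument valid for arbitrary rank, and the same column-Gram reasoning is reused verbatim in Proposition~\ref{prop:trans}(ii). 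Your approach buys a very short proof in the full-rank case at the cost of separate bookkeeping otherwise.

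One correction to your expectation: the paper does \emph{not} restrict to $\MnmCm$ for this proposition. Mutual transitivity is asserted and used on all of $\MnmC$ --- the generalised dual pair \eqref{diag:gendpUnUm} and the adjoint orbit correspondence in the following subsection both require arbitrary rank; the restriction to full rank enters only when upgrading to a Lie--Weinstein dual pair. So the general case cannot be omitted or deferred.
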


\begin{proof}
(i) From $\on{j}_R(E) = \frac{i}{2}E^\dagger E$, it is clear that the level sets of $\on{j}_R$ are invariant under the left $\Un$-action. 

Now suppose $\on{j}_R (E) = \on{j}_R(E')$, implying $E^\dagger E = (E')^\dagger E'$. Let $E_a$ denote the $a$th column of $E$, considered as a vector in $\mathbb{C}^n$. So we have the $m^2$ conditions 
\begin{equation}\label{mmconditions}
E_a^\dagger E_b = (E_a')^\dagger E'_b \qquad a,b=1,\ldots,m.
\end{equation}
The set $\lbrace E_1, E_2,\ldots, E_m\rbrace\subset \mathbb{R}^n$ has a maximal linearly independent subset $\lbrace E_{a_1}, \ldots, E_{a_k}\rbrace$ for some $k\le m$, and such a subset constitutes a basis for the subspace $\on{im} E\subset \mathbb{C}^n$. We claim that $\lbrace E'_{a_1},\ldots, E'_{a_k}\rbrace$ is a basis for $\on{im} E'$. 

Firstly, suppose $\sum_{i=1}^k \alpha_i E'_{a_i} = 0$ for some $\alpha_i\in\mathbb{C}$. Then for any $c=1,\ldots, m$,
\[
0 = (E'_c)^\dagger \left(\sum_{i=1}^k\alpha_i E'_{a_i} \right) = E_c^\dagger \left(\sum_{i=1}^k\alpha_iE_{a_i}\right),
\]
using the conditions (\ref{mmconditions}). It follows that $\sum_{i=1}^k \alpha_i E_{a_i} \in \on{im}E \cap (\on{im}E)^\perp = \lbrace 0 \rbrace$ (where $^\perp$ denotes orthogonality with respect to the usual inner product in $\mathbb{C}^n$). Hence $\sum_{i=1}^k \alpha_i E_{a_i} = 0$, and so linear independence of the $E_{a_i}$ guarantees that $\alpha_i=0$ for all $i=1,\ldots, k$, proving linear independence of $\lbrace E'_{a_1},\ldots, E'_{a_k}\rbrace$.

Also, for any $c=1,\ldots, m$, there exist $\beta_i \in \mathbb{C}$ such that $E_c = \sum_{i=1}^k \beta_i E_{a_i}$. Then
\[
(E'_d)^\dagger \left(E'_c - \sum_{i=1}^k \beta_i E'_{a_i} \right) = E_d^\dagger \left(E_c - \sum_{i=1}^k \beta_i E_{a_i}\right) = 0\qquad d=1,\ldots, m,
\]
implying that $E_c' - \sum_{i=1}^k\beta_i E_{a_i}' \in \on{im}E' \cap (\on{im}E')^\perp = \lbrace 0\rbrace$, i.e., $E_c' = \sum_{i=1}^k\beta_iE_{a_i}'$. Hence $\lbrace E_{a_1}',\ldots, E_{a_k}'\rbrace$ span $\on{im}E'$.

Now define $U:\on{im}E\rightarrow \on{im} E'$ by $U(E_{a_i}) := E_{a_i}'$. From \eqref{mmconditions} we see $U$ is an isometry. It can be extended to the entire space $\mathbb{C}^{n}$ by picking an arbitrary isometry $(\on{im}E)^\perp \rightarrow (\on{im}E')^\perp$, giving $U\in\Un$.

From the discussion above, we see that if $E_c = \sum_{i=1}^k\beta_i E_{a_i}$, then $E_c' = \sum_{i=1}^k\beta_iE_{a_i}'$. It follows that
\[
U(E_c) = \sum_{i=1}^k\beta_i U(E_{a_i}) = \sum_{i=1}^k\beta_i E_{a_i}' = E_c'
\]
for all $c=1,\ldots, m$, and so $E' = UE$. Hence $E$ and $E'$ lie in the same $\Un$-orbit.
\medskip

\noindent (ii) Same method as part (i), except applied to rows of $E$ instead of columns.
\end{proof}

We have proved mutual transitivity of the $(\Un,\Um)$ actions on $\MnmC$. Thus we get a (generalised) dual pair of momentum maps 
\begin{equation}\label{diag:gendpUnUm}
\begin{diagram}
&&  \MnmC\\
&\ldTo^{\mathrm{j}_{L}} && \rdTo^{\mathrm{j}_{R}} \\
\un_{\on{j}_L} &&&&\um_{\on{j}_R}
\end{diagram}
\end{equation}
where $\un_{\on{j}_L}$ and $\um_{\on{j}_R}$ are the images of the left and right momentum maps respectively.

\begin{remark}
The momentum maps $\on{j}_L, \on{j}_R$ in fact define a singular dual pair, in the sense of Ortega \cite{Ortega2003, OrtegaRatiu2004}.
\end{remark}

\subsection{Adjoint orbit correspondence}
We briefly recall the description of the adjoint orbit correspondence from \cite{BalleierWurzbacher2012}. Assuming for concreteness that $n\ge m$, any $E\in \MnmC$ has a unique singular-value decomposition $E = U\Sigma V^\dagger$, where $U\in \Un, V\in \Um$, and
\[
\Sigma = \begin{bmatrix} 
\sigma_1 & 0 & \ldots & 0\\
0 & \sigma_2 & \ldots & 0\\
\vdots & \vdots & \ddots & \vdots \\
0 & 0 & \ldots & \sigma_m \\
0 & 0 & \ldots & 0 \\
\vdots & \vdots & \vdots & \vdots
\end{bmatrix}
\]
with $\sigma_1 \ge \sigma_2\ge \ldots \ge \sigma_m \ge 0$. 
The expressions for the momentum maps (Proposition \ref{prop:lamomUnUm}) imply that $\on{j}_L(E)$ is in the  adjoint orbit of the diagonal matrix $\on{diag}[\frac{i}{2}\sigma_1^2, \frac{i}{2}\sigma_2^2, \ldots ,\frac{i}{2}\sigma_m^2,0,\ldots, 0]\in\un$, while $\on{j}_R(E)$ is in the adjoint orbit of $\on{diag}[\frac{i}{2}\sigma_1^2, \frac{i}{2}\sigma_2^2, \ldots ,\frac{i}{2}\sigma_m^2]\in\um$. The correspondence between such orbits, for all $\sigma_1\ge \sigma_2\ge\ldots \ge \sigma_m \ge 0$, is one-to-one (note our conventions for $\on{j}_R$ introduce a minus sign relative to \cite{BalleierWurzbacher2012}).

\subsection{Restriction to a Lie-Weinstein dual pair}

For completeness, we now characterise the subset of $\MnmC$ where the generalised dual pair 
\eqref{diag:gendpUnUm} becomes a Lie-Weinstein dual pair. As before, assume $n\ge m$ for concreteness.

\begin{proposition}
The momentum maps $\on{j}_L, \on{j}_R$ define a Lie-Weinstein dual pair on the (open) subset $\MnmCm$ of full rank matrices in $\MnmC$. 
\end{proposition}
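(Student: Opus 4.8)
The plan is to invoke Proposition~\ref{prop:dualpairrank}: since the $(\Un,\Um)$ actions are already known to be mutually transitive (Proposition~\ref{prop:UnUm}), it suffices to show that the momentum maps $\on{j}_L,\on{j}_R$ restrict to \emph{constant rank} maps on $\MnmCm$, and in fact by Remark~\ref{remark:constantrank} it is enough to check this for one of them. I would do this via Lemma~\ref{lemma:free}: I claim the right $\Um$-action on $\MnmCm$ is locally free, hence $\on{j}_R$ is a submersion on $\MnmCm$ and in particular has constant rank there. Indeed, if $E$ has rank $m$ then the map $V\mapsto EV$ from $\Um$ to $\MnmC$ is injective (if $EV=EV'$ then $E(V-V')=0$, and injectivity of $E:\CC^m\to\CC^n$ forces $V=V'$), so the stabiliser of $E$ is trivial; one still needs that $\MnmCm$ is $\Um$-invariant, which is immediate since right multiplication by an invertible matrix preserves rank.

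Having established that $\on{j}_R|_{\MnmCm}$ is a constant-rank (indeed submersive) momentum map, Proposition~\ref{prop:dualpairrank} together with Remark~\ref{remark:constantrank} applies directly: the images $\un_{\on{j}_L}$ and $\um_{\on{j}_R}$ (restricted to this open set) can be given smooth structures making
\[
\un_{\on{j}_L}\xleftarrow{\on{j}_L}\MnmCm\xrightarrow{\on{j}_R}\um_{\on{j}_R}
\]
a Lie-Weinstein dual pair, i.e.\ $(\ker T\on{j}_L)^\Omega=\ker T\on{j}_R$ with both maps surjective submersions onto their images. One small point to note is that $\MnmCm$ is open in $\MnmC$ and the hypotheses of Section~\ref{s2} (symplectic $M$, Hamiltonian actions, mutual transitivity) are inherited by any invariant open subset: mutual transitivity on $\MnmCm$ follows because the proof of Proposition~\ref{prop:UnUm} shows each $\Um$-orbit (resp.\ $\Un$-orbit) is an entire level set, and the relevant level sets of full-rank matrices consist only of full-rank matrices.

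I expect the only genuine content to be the local freeness argument, which is short, so the "main obstacle" is really just bookkeeping: confirming that everything in Section~\ref{s2} localises correctly to the open invariant subset $\MnmCm$, and making sure the image smooth structures from Proposition~\ref{prop:dualpairrank} are the ones intended. Optionally, one can make the result more concrete by identifying the images: on $\MnmCm$, Proposition~\ref{prop:lamomUnUm} gives $\on{j}_R(E)=\tfrac{i}{2}E^\dagger E$ with $E^\dagger E$ positive-definite, so $\um_{\on{j}_R}$ is exactly the adjoint orbit space of $\tfrac{i}{2}$ times positive-definite Hermitian $m\times m$ matrices, while $\on{j}_L(E)=\tfrac{i}{2}EE^\dagger$ has rank exactly $m$, so $\un_{\on{j}_L}$ is the union of adjoint orbits of $\tfrac{i}{2}\on{diag}[\sigma_1^2,\dots,\sigma_m^2,0,\dots,0]$ with all $\sigma_j>0$; these match under the correspondence of Corollary~\ref{coro} as already described in the preceding subsection. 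This identification is not needed for the proof but clarifies the geometric picture.
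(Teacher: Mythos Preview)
Your proposal is correct and follows essentially the same approach as the paper: the paper's proof simply observes that the right $\Um$-action is free on $\MnmCm$, invokes Lemma~\ref{lemma:free} to get constant rank of $\on{j}_R$, and then appeals to Remark~\ref{remark:constantrank} (and implicitly Proposition~\ref{prop:dualpairrank}) to conclude. Your additional bookkeeping about mutual transitivity restricting to the invariant open subset and the explicit description of the images is sound but not required.
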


\begin{proof}
The right $\Um$-action is free on $\MnmCm$. Then using Lemma \ref{lemma:free}, $\on{j}_R$ has constant rank there, and then Remark \ref{remark:constantrank} implies the result.
\end{proof}

\begin{proposition}
The set $\MnmCm$ is the largest subset of $\MnmC$ on which $\on{j}_L, \on{j}_R$ define a Lie-Weinstein dual pair. 
\end{proposition}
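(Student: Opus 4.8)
The plan is to show that at any matrix $E\in\MnmC$ of rank strictly less than $m$, the Lie-Weinstein condition $(\ker T_E\on{j}_L)^\Omega = \ker T_E\on{j}_R$ fails, so that $\MnmCm$ is not merely a subset but the maximal subset on which both momentum maps behave well. By Remark \ref{remark:constantrank}, the Lie-Weinstein condition is equivalent (in the mutually transitive setting) to either momentum map having constant rank; so it suffices to exhibit a rank jump, and the cleanest way is to compute $\ker T_E\on{j}_R$ directly and compare its dimension with that of a full-rank $E$.

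First I would compute the differential. Since $\on{j}_R(E) = \tfrac{i}{2}E^\dagger E$ is quadratic, $T_E\on{j}_R(F) = \tfrac{i}{2}(F^\dagger E + E^\dagger F)$, so $F\in\ker T_E\on{j}_R$ iff $F^\dagger E + E^\dagger F = 0$, i.e. $E^\dagger F$ is skew-Hermitian in $\um$. Equivalently, writing $F = E G + F_\perp$ where the columns of $F_\perp$ lie in $(\on{im}E)^\perp$, the condition reads $E^\dagger E\,G$ skew-Hermitian (the $F_\perp$ part is unconstrained). I would then count dimensions: when $E$ has full rank $m$, $E^\dagger E$ is invertible, so $G$ ranges over an affine-linear space isomorphic to $\um$ (dimension $m^2$) and $F_\perp$ over $\MnmC[(n-m)\times m]$-many free entries; when $\rank E = k < m$, the map $G\mapsto E^\dagger E\,G$ has a kernel (the columns of $G$ landing in $\ker E$), which enlarges $\ker T_E\on{j}_R$. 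Carrying this out gives $\dim\ker T_E\on{j}_R = 2nm - k^2$ (as a real dimension), which is strictly larger than $2nm - m^2$ for $k<m$. Hence $\on{j}_R$ does not have constant rank on any open neighbourhood of such an $E$, and by Remark \ref{remark:constantrank} neither can $\on{j}_L$; the Lie-Weinstein condition therefore fails at every $E$ with $\rank E < m$.

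Alternatively, and perhaps more in the spirit of the earlier example with $\on{SO}(2)$ on $\mathbb{R}^2$, I would verify the failure of $(\ker T_E\on{j}_L)^\Omega = \ker T_E\on{j}_R$ at a reduced-rank point more concretely: since $\on{j}_R^{-1}(\on{j}_R(E)) = \Un\cdot E$ by Proposition \ref{prop:UnUm}(i), and $\mfg_1\cdot E = \{\zeta E : \zeta\in\un\}$ has real dimension $n^2 - \dim(\un)_E$ where the stabiliser $(\un)_E = \{\zeta : \zeta E = 0\}$ depends on $\rank E$, the tangent space to the fibre jumps dimension exactly when $\rank E$ drops, while $\ker T_E\on{j}_R$ (computed as above) may not match $(\mfg_1\cdot E)^\Omega$ at such points. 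The main obstacle is purely bookkeeping: one must be careful that the relevant dimension count is over $\mathbb{R}$ (these are real Lie algebras acting on a real vector space underlying a complex one) and that the identification $\un\simeq\un^*$ via the trace form does not obscure the rank of the differential; once the real dimensions of $\ker T_E\on{j}_R$ for $\rank E = k$ are tabulated, the strict inequality for $k<m$ finishes the argument immediately.
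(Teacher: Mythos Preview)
Your approach is the same as the paper's: compute the differential $T_E\on{j}_R(F)=\tfrac{i}{2}(F^\dagger E+E^\dagger F)$ and show that its rank equals $\dim\um=m^2$ exactly when $E$ has full rank. The paper carries this out via the singular-value decomposition of $E$ rather than your decomposition $F=EG+F_\perp$, but the idea is identical.

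Your stated formula $\dim_{\RR}\ker T_E\on{j}_R = 2nm - k^2$ (with $k=\rank E$) is incorrect, however. Reducing by SVD to $E=\Sigma$ with $k$ nonzero singular values and writing $F$ in $(k,\,m{-}k,\,n{-}m)\times(k,\,m{-}k)$ blocks, the condition $\Sigma^\dagger F + F^\dagger\Sigma=0$ forces the $(1,2)$-block of $F$ to vanish and constrains the $(1,1)$-block to a copy of $\mathfrak{u}(k)$, while the remaining rows are free. This gives
\[
\dim_{\RR}\ker T_E\on{j}_R \;=\; k^2 + 2(n-k)m \;=\; 2nm - k(2m-k),
\]
so $\rank T_E\on{j}_R = k(2m-k) = m^2 - (m-k)^2$, not $k^2$. (Concrete check: for $n=m=2$ and $E=\on{diag}(1,0)$ the kernel is $5$-real-dimensional, not $7$.) The source of the error is the parametrisation $F = EG + F_\perp$: when $\rank E<m$ it is redundant---adding to $G$ any matrix with columns in $\ker E$ leaves $EG$ unchanged---and moreover $(\on{im}E)^\perp$ has complex dimension $n-k$, not $n-m$; a straight parameter count in $(G,F_\perp)$-space therefore overcounts.

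The conclusion nonetheless survives the correction: $k(2m-k)=m^2$ iff $k=m$, so $\on{j}_R$ fails to be a submersion at every $E$ of deficient rank, and no open subset strictly containing $\MnmCm$ can carry a Lie-Weinstein dual pair.
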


\begin{proof}

Let $E\in \MnmC$ have singular-value decomposition $U\Sigma V^\dagger$, where $\Sigma$ is as described in the previous section. Suppose $\sigma_{m-k}$ is the last non-zero $\sigma_i$ (implying $\sigma_{m-k+1} = \ldots =\sigma_m = 0$). Note $k=0$ is possible. From
\[
T_E\on{j}_R(X_E) = \frac{i}{2}(X^\dagger E + E^\dagger X),
\]
we see that $\ker T_E\on{j}_R$ consists of matrices $X = U\widetilde{X}$, where $\widetilde{X}\in\MnmC$ is non-zero only in the lower $m\times(n-m+k)$ block. Hence $\on{im} T_E\on{j}_R = nm - m(n-m+k) = m(m-k)$.
This equals $\dim \um = m^2$ iff all of the $\sigma_i$ are non-zero, which occurs iff $E$ has full rank $m$.
\end{proof}

\section{Matrix analogue of ideal fluid dual pair}\label{s4}

In this section, we describe a symplectic structure on $\MtnmR$ and demonstrate that the left (resp. right) action of $\Sptn$ (resp. $\Om$) is Hamiltonian. We then show that on a suitable subset of $\MtnmR$, the $\Sptn$- and $\Om$-actions are mutually transitive, and deduce that they define a Lie-Weinstein dual pair. Finally, we describe explicitly the correspondence between adjoint orbits in the images of the respective momentum maps. 

This dual pair was originally discussed in \cite[pp.502-506]{KazhdanKostantSternberg1978}.

\subsection{Commuting Hamiltonian actions}
The vector space $\MtnmR$ has a symplectic form
\[
\Omega(E,F) := \Tr(E^\top\mathbb{J}F),
\]
where $\mathbb{J} = \begin{bmatrix} 0_n & I_n \\ -I_n & 0_n\end{bmatrix}$. As before, we think of the pair $(\MtnmR, \Omega)$ as a symplectic manifold by using the canonical isomorphism $T_E\MtnmR \simeq \MtnmR$.

The natural left $\Sptn$- and right $\Om$-actions act symplectically on $\MtnmR$, considered as a symplectic manifold.
These actions are Hamiltonian, with momentum maps $\on{J}_L:\MtnmR\rightarrow \sptn^*$ and $\on{J}_R:\MtnmR\rightarrow \om^*$ given by
\begin{equation}\label{lere}
\langle\on{J}_L(E),\zeta\rangle = \frac{1}{2}\Omega(\zeta E,E) \qquad\textrm{and}\qquad \langle\on{J}_R(E),\xi\rangle = \frac{1}{2}\Omega(E\xi,E).
\end{equation}
Again, both momentum maps are equivariant,
\[
\on{J}_L(SE) = \Ad_{S^{-1}}^*(\on{J}_L(E)) \qquad\textrm{and}\qquad \on{J}_R(EO) = \Ad_O^*(\on{J}_R(E)).
\]

\subsection{Lie algebra-valued momentum maps}
We recall the trace form (\ref{traceform})
\[
\llangle \xi,\zeta\rrangle = \on{Re}\on{Tr}(\xi\zeta) = \on{Tr}(\xi\zeta),
\] 
now defined on the Lie algebras $\sptn$ and $\om$. Using $\llangle\cdot,\cdot\rrangle$ to identify Lie algebras with their duals, we can again define Lie algebra-valued momentum maps.

\begin{proposition}
\begin{enumerate}[(i)]
\item 
The Lie algebra-valued momentum map $\mathrm{j}_L:\MtnmR\rightarrow\sptn$ is 
\[
\mathrm{j}_L(E) = -\frac{1}{2}EE^\top\mathbb{J}.
\]
\item
The Lie algebra-valued momentum map $\mathrm{j}_R:\MtnmR\rightarrow\om$ is
\[
\mathrm{j}_R(E) = -\frac{1}{2}E^\top\mathbb{J} E.
\]
\end{enumerate}
\end{proposition}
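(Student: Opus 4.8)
The plan is to proceed exactly as in the proof of Proposition \ref{prop:lamomUnUm}, converting the linear-functional momentum maps in \eqref{lere} into $\mathfrak g$-valued maps via the non-degenerate trace form $\llangle\cdot,\cdot\rrangle$. Recall that the defining property is: for $\zeta\in\sptn$, the element $\mathrm{j}_L(E)\in\sptn$ must satisfy $\llangle\mathrm{j}_L(E),\zeta\rrangle = \langle\on{J}_L(E),\zeta\rangle = \tfrac12\Omega(\zeta E,E)$, and likewise for $\mathrm{j}_R$ with $\xi\in\om$.

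For part (i), I would expand the right-hand side using $\Omega(A,B)=\Tr(A^\top\mathbb J B)$:
\[
\tfrac12\Omega(\zeta E,E) = \tfrac12\Tr\!\big((\zeta E)^\top\mathbb J E\big) = \tfrac12\Tr(E^\top\zeta^\top\mathbb J E).
\]
Since $\zeta\in\sptn$ means $\zeta^\top\mathbb J + \mathbb J\zeta = 0$, i.e.\ $\zeta^\top\mathbb J = -\mathbb J\zeta$, this becomes $-\tfrac12\Tr(E^\top\mathbb J\zeta E) = -\tfrac12\Tr(E E^\top\mathbb J\zeta)$ by cyclicity of trace. Thus $\llangle\mathrm{j}_L(E),\zeta\rrangle = \Tr\!\big((-\tfrac12 EE^\top\mathbb J)\zeta\big)$, which identifies $\mathrm{j}_L(E) = -\tfrac12 EE^\top\mathbb J$, provided one checks this matrix actually lies in $\sptn$. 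The symplectic-Lie-algebra condition $(\tfrac12EE^\top\mathbb J)^\top\mathbb J + \mathbb J(\tfrac12EE^\top\mathbb J) = 0$ follows from $\mathbb J^\top = -\mathbb J$, $\mathbb J^2 = -I$, and symmetry of $EE^\top$: indeed $(EE^\top\mathbb J)^\top\mathbb J = \mathbb J^\top EE^\top\mathbb J = -\mathbb J EE^\top\mathbb J = -\mathbb J(EE^\top\mathbb J)$. Part (ii) is the same computation with the roles swapped: $\tfrac12\Omega(E\xi,E) = \tfrac12\Tr(\xi^\top E^\top\mathbb J E)$, and using $\xi^\top = -\xi$ for $\xi\in\om$ this equals $-\tfrac12\Tr(\xi E^\top\mathbb J E) = \Tr\!\big((-\tfrac12 E^\top\mathbb J E)\xi\big)$, giving $\mathrm{j}_R(E) = -\tfrac12 E^\top\mathbb J E$; skew-symmetry of this matrix follows from $(E^\top\mathbb J E)^\top = E^\top\mathbb J^\top E = -E^\top\mathbb J E$, so it genuinely lies in $\om$.

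There is no real obstacle here — the only point requiring the slightest care is verifying the membership of the candidate matrices in the respective Lie algebras (so that the trace-form identification is legitimate), and getting the sign right from the defining relations $\zeta^\top\mathbb J = -\mathbb J\zeta$ and $\xi^\top = -\xi$. I would present part (i) in full and simply say ``Similar'' for part (ii), mirroring the style already used in the proof of Proposition \ref{prop:lamomUnUm}.
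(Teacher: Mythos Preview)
Your proposal is correct and follows essentially the same approach as the paper's proof: expand $\tfrac12\Omega(\zeta E,E)$ via the trace, use the Lie algebra relation $\zeta^\top\mathbb J = -\mathbb J\zeta$ together with cyclicity to rewrite it as $\llangle -\tfrac12 EE^\top\mathbb J,\zeta\rrangle$, note that the candidate lies in $\sptn$, and say ``Similar'' for part (ii). The paper's version is slightly terser (it does not spell out the verification that $-\tfrac12 EE^\top\mathbb J\in\sptn$), but the argument is the same.
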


\begin{proof}
(i) For $\zeta\in\sptn,\ E\in\MtnmR$,
\begin{align*}
\langle\mathrm{J}_L(E),\zeta\rangle &= \frac{1}{2}\Omega(\zeta E,E) = \frac{1}{2}\mathrm{Tr}(E^\top\zeta^\top\mathbb{J} E)  \qquad\textrm{by \eqref{lere} }\\
&= \frac{1}{2}\mathrm{Tr}(EE^\top\zeta^\top \mathbb{J}) = \frac{1}{2}\mathrm{Tr}(-EE^\top\mathbb{J}\zeta) \qquad\textrm{since }\zeta\in\sptn \\
&= \left\llangle -\frac{1}{2}EE^\top\mathbb{J}, \zeta\right\rrangle.
\end{align*}
Since $-\frac{1}{2}EE^\top\mathbb{J}\in\sptn$, the result follows.

\noindent (ii)
Similar.
\end{proof}


\subsection{The mutually transitive property on full rank matrices}\label{sec:mtSpO}

In contrast with the case of the $\Un$- and $\Um$-actions on $\MnmC$, 
demonstration of the mutually transitive property of the $\Sptn$- and $\Om$-actions requires restriction to a subset of $\MtnmR$. To this end, let $\MtnmRm\subset \MtnmR$ denote the matrices of rank $m$. In order for $\MtnmRm$ to be nonempty, we require $m\le 2n$. Defining $ f:\MtnmR\rightarrow \mathbb{R}$ by $f(E) = \det(E^\top E)$,
we see that $\MtnmRm = f^{-1}((0,\infty))$, and so $\MtnmRm$ is an open subset of $\MtnmR$. It follows that $\Omega$ remains non-degenerate when restricted to $\MtnmRm$. Additionally, since elements of $\Om$ and $\Sptn$ have full rank, their group actions preserve $\MtnmRm$. We denote restrictions of $\Omega,\, \on{j}_L,\, \on{j}_R$ to $\MtnmRm$ by the same symbols for convenience.

\begin{proposition}\phantomsection\label{prop:trans}
\begin{enumerate}[{\rm (i)}]
\item
$\Sptn$ acts transitively on the level sets of $\,\on{j}_R:\MtnmRm\rightarrow \om$.
\item
$\Om$ acts transitively on the level sets of $\,\on{j}_L:\MtnmRm\rightarrow \sptn$.
\end{enumerate}
\end{proposition}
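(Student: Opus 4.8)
The plan is to mimic the structure of the proof of Proposition \ref{prop:UnUm}, adapting the linear-algebra argument to the bilinear form $\mathbb{J}$ in place of the Hermitian inner product. I treat part (i) in detail; part (ii) follows by transposing the roles of rows and columns, exactly as in the unitary case.

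For part (i), first note that since $\on{j}_R(E) = -\tfrac12 E^\top\mathbb{J}E$ and $(SE)^\top\mathbb{J}(SE) = E^\top(S^\top\mathbb{J}S)E = E^\top\mathbb{J}E$ for $S\in\Sptn$, the level sets of $\on{j}_R$ are $\Sptn$-invariant. Now suppose $\on{j}_R(E) = \on{j}_R(E')$ with $E, E'\in\MtnmRm$, i.e.\ $E^\top\mathbb{J}E = (E')^\top\mathbb{J}E'$. Writing $E_a, E'_a\in\RR^{2n}$ for the $a$th columns, this gives the $m^2$ scalar conditions $\Omega(E_a, E_b) = \Omega(E'_a, E'_b)$ for $a,b = 1,\ldots,m$, where $\Omega$ is the standard symplectic form on $\RR^{2n}$. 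Since $E$ has full rank $m$, the columns $E_1,\ldots,E_m$ are linearly independent and span an $m$-dimensional subspace $W := \on{im}E \subset \RR^{2n}$; similarly $W' := \on{im}E'$. The key subtlety versus the unitary case is that $\Omega$ is not definite, so I cannot immediately conclude $W'$ has dimension $m$ by the ``$\on{im}E\cap(\on{im}E)^\perp = \{0\}$'' trick — a full-rank $E$ can have isotropic image. Instead I argue directly: if $\sum_i\alpha_i E'_i = 0$, then pairing against each $E'_c$ via $\Omega$ and using the conditions gives $\Omega\big(E_c, \sum_i\alpha_i E_i\big) = 0$ for all $c$, but this only shows $\sum_i\alpha_i E_i \in W^\Omega$, not that it vanishes. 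However, $E'\in\MtnmRm$ is given to have rank $m$, so its columns are automatically linearly independent — linear independence of the $E'_i$ is a hypothesis, not something to be derived. Thus I can define a linear isomorphism $T\colon W\to W'$ by $T(E_a) := E'_a$, and the $m^2$ conditions say precisely that $T$ is a symplectic isomorphism $(W, \Omega|_W)\to (W', \Omega|_{W'})$.

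The remaining step is to extend $T$ to an element $S\in\Sptn$ with $SE = E'$, i.e.\ to show that any symplectic isomorphism between two $m$-dimensional subspaces of $(\RR^{2n},\Omega)$ extends to a global symplectomorphism. This is the standard Witt-type extension theorem for symplectic vector spaces: decompose $W$ as $W = (W\cap W^\Omega)\oplus W_s$ where $W_s$ is a symplectic complement of the radical $W\cap W^\Omega$ inside $W$; since $T$ is a symplectic map it carries the radical of $W$ onto the radical of $W'$ and $W_s$ onto a symplectic complement $W'_s$ of the radical of $W'$. One then extends over the symplectic parts using that symplectic bases can be completed, and over the isotropic radical parts by pairing each isotropic subspace with a transverse isotropic subspace (a Lagrangian-type partner) in $\RR^{2n}$, matching the two constructions up; the dimension count $m\le 2n$ guarantees enough room. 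This extension theorem is elementary and standard, so I would either cite it or sketch it in one or two lines. Once $S\in\Sptn$ with $S|_W = T$ is found, $SE_a = T(E_a) = E'_a$ for all $a$, hence $SE = E'$, so $E$ and $E'$ lie in the same $\Sptn$-orbit, proving transitivity on level sets of $\on{j}_R$.

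The main obstacle — and the only real point of departure from the unitary proof — is precisely the indefiniteness of $\Omega$: the image of a full-rank $E$ need not be a symplectic subspace, so the clean orthogonal-complement argument of Proposition \ref{prop:UnUm} must be replaced by the symplectic Witt extension theorem applied to possibly-degenerate subspaces. Everything else (invariance of level sets, reconstructing $E'$ column by column, the symmetric treatment of part (ii) via rows) is a routine transcription. I would also remark that this is exactly why the restriction to $\MtnmRm$ is needed: without full rank one cannot guarantee that $T$ is well-defined and injective on $\on{im}E$.
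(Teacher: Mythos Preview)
Your proof is correct and follows essentially the same route as the paper: define the map on columns using the full-rank hypothesis on both $E$ and $E'$, then invoke Witt's theorem to extend the partial symplectic isomorphism to all of $\mathbb{R}^{2n}$. The paper simply cites Witt's theorem (from Artin's \emph{Geometric Algebra}) as a black box rather than sketching its proof, and for part (ii) it explicitly reduces to the positive-definite argument of Proposition~\ref{prop:UnUm}(i) by setting $F = E^\top$, which is the ``rows'' version you describe.
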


Before proving Proposition (\ref{prop:trans}), we need the following standard result.

\begin{proposition}[Witt's theorem]{\rm\cite[Theorem 3.9]{Artin1957}}
Let $V$ be a finite-dimensional vector space, over a field $\mathbb{F}$ of characteristic different from 2, and $q:V\times V\rightarrow \mathbb{F}$ a symmetric or anti-symmetric nondegenerate bilinear form on $V$. If $f:U\rightarrow U'$ is a (linear) isometry between two subspaces of $V$, then $f$ extends to an isometry of $V$.
\end{proposition}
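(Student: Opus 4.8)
The plan is to prove the extension result by induction on $d=\dim U$, the case $d=0$ being vacuous, and to organise the inductive step around whether the restriction $q|_U$ vanishes identically (the \emph{totally isotropic} case $U\subseteq U^\perp$) or not. The induction is taken uniformly over all nondegenerate ambient spaces, so that passing to a smaller ambient space at a lower dimension is permitted. Throughout, nondegeneracy of $q$ on $V$ gives $\dim U+\dim U^\perp=\dim V$ for every subspace, and $V=U\perp U^\perp$ whenever $q|_U$ is nondegenerate. The symmetric and anti-symmetric cases need slightly different tools, so I would treat them in parallel and indicate where they diverge.

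Consider first the symmetric case with $q|_U$ not identically zero, so that $U$ contains an anisotropic vector $u$, i.e.\ $q(u,u)\ne 0$; put $u'=f(u)$, so $q(u',u')=q(u,u)\ne 0$. The basic tool is the reflection
\[
\tau_v(x)=x-\frac{2q(x,v)}{q(v,v)}\,v,
\]
defined for anisotropic $v$, which is an isometry of $V$ fixing $v^\perp$ pointwise; the factor $2$ is exactly where $\on{char}\mathbb{F}\ne 2$ is needed. Since $q(u,u)=q(u',u')\ne 0$, a short computation shows at least one of $u\pm u'$ is anisotropic, and a suitable product $\sigma$ of one or two reflections among $\tau_{u-u'},\tau_{u+u'},\tau_{u'}$ satisfies $\sigma(u)=u'$. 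Replacing $f$ by $\sigma^{-1}\circ f$ reduces to the case $f(u)=u$; then $f$ carries $U\cap u^\perp$ isometrically into the nondegenerate space $u^\perp$, and the inductive hypothesis applied inside $u^\perp$ (to a subspace of dimension $d-1$) yields an extension there, which together with $f(u)=u$ extends $f$ to $V=\langle u\rangle\perp u^\perp$.

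The crux is the totally isotropic case, where no anisotropic vector — hence no reflection — is available; reflections are likewise unavailable throughout the anti-symmetric case, since there every vector is isotropic. Here I would reduce to a nondegenerate $U$ by a hyperbolic enlargement: using nondegeneracy of $q$ on $V$ one builds a totally isotropic $U^*$ with $U\cap U^*=0$ on which $q$ induces a perfect pairing between $U$ and $U^*$, so that $H=U\oplus U^*$ is nondegenerate, and likewise $H'=U'\oplus(U')^*$ for $U'$. Extending $f$ to $\hat f:H\to H'$ by sending $U^*$ to $(U')^*$ via the map dual to $f$ under these pairings gives an isometry between the nondegenerate subspaces $H$ and $H'$: the cross terms match by definition of the dual map, while the diagonal terms vanish by total isotropy, in both the symmetric and anti-symmetric cases. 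A general $U$ is handled by the same device applied to its radical $R=U\cap U^\perp$ inside the orthogonal complement of the nondegenerate part of $U$, embedding $U$ in a nondegenerate subspace on which $f$ extends isometrically. Thus everything reduces to extending an isometry between \emph{nondegenerate} subspaces: for symmetric $q$ this is the reflection argument above, and for anti-symmetric $q$ one splits $V=U\perp U^\perp=U'\perp(U')^\perp$ and uses that symplectic spaces of equal dimension are isometric to match the complements.

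I expect the totally isotropic part to be the main obstacle: reflections are blind to it, so the hyperbolic partner $U^*$ must be manufactured from the global nondegeneracy of $q$, and in the general case it must be chosen orthogonal to the nondegenerate part of $U$; checking that the resulting $\hat f$ preserves $q$ is the step needing the most care. The hypothesis $\on{char}\mathbb{F}\ne 2$ enters twice — in the existence of the reflections $\tau_v$, and in the clean dichotomy furnished by $q(v,v)=-q(v,v)$ in the anti-symmetric case. Since the statement is classical, in practice I would simply cite \cite[Theorem 3.9]{Artin1957}, the sketch above recording the argument behind that reference.
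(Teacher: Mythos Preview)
The paper gives no proof of Witt's theorem; it simply states the result with the citation to \cite[Theorem 3.9]{Artin1957} and invokes it as a black box in the proof of the subsequent transitivity proposition. Your concluding sentence---that in practice you would just cite Artin---is therefore exactly what the paper does, and the sketch you offer (reflections for anisotropic vectors, hyperbolic enlargement for the totally isotropic part) is a sound outline of the classical argument, going beyond what the paper itself provides.
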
 

\begin{proof}[Proof of Proposition (\ref{prop:trans})]
(i) Since $\on{j}_R(E) = -\frac{1}{2}E^\top\mathbb{J} E$, clearly the left $\Sptn$-action preserves the level sets of $\on{j}_R$.

Now suppose $\mathrm{j}_R(E) = \mathrm{j}_R(E')$, implying $E^\top \mathbb{J}E = (E')^\top\mathbb{J} E'$. Letting $E_a$ denote the $a$th column of $E$, considered as a vector in $\mathbb{R}^{2n}$, this gives the $m^2$ conditions
\[
E_a^\top\mathbb{J} E_b = (E'_a)^\top\mathbb{J} E'_b \qquad\textrm{for }a,b=1,\ldots, m.
\]
Define $S:\on{im}E\rightarrow \on{im}{E'}$ by $S(E_a) = E'_a$ (this is well-defined, since the columns $E_a$ are linearly independent). So the above condition becomes
\[
\omega(E_a,E_b) = \omega(SE_a,SE_b),
\]
where $\omega(X,Y) := X^\top\mathbb{J} Y$ denotes the standard symplectic form on $\mathbb{R}^{2n}$.
By Witt's theorem, there exists a linear extension $S:\mathbb{R}^{2n}\rightarrow\mathbb{R}^{2n}$ preserving $\omega$. Then $S\in\Sptn$, and  $E' = SE$. So $E'$ and $E$ lie in the same $\Sptn$-orbit.

\medskip
\noindent (ii) Since $\on{j}_L(E) = -\frac{1}{2}EE^\top \mathbb{J}$, clearly the right $\Om$-action preserves the level sets of $\on{j}_L$.

Now suppose $\mathrm{j}_L(E) = \mathrm{j}_L(E')$, implying $EE^\top = E'(E')^\top$. This can be put into a form similar to Proposition (\ref{prop:UnUm})(i) by letting $F = E^\top, F'=(E')^\top$. Following a similar argument as there, we obtain an isometry $O:\mathbb{R}^m\rightarrow \mathbb{R}^m$ with $F' = OF$, i.e., $E' = E O^\top$. Since $O^\top\in \Om$, we see that $E$ and $E'$ are related by the right $\Om$ action.

\end{proof}

We have proved mutual transitivity of the 
$(\Sptn,\Om)$ actions. Since $\Om$ acts freely on $\MtnmRm$, we conclude by Lemma \ref{lemma:free} that $\on{j}_R$, and so also $\on{j}_L$ (Remark \ref{remark:constantrank}), has constant rank on $\MtnmRm$, and so the momentum maps define a Lie-Weinstein dual pair
\begin{diagram}
&& \MtnmRm\\
&\ldTo^{\on{j}_{L}} && \rdTo^{\on{j}_{R}} \\
\sptn_{\on{j}_L} &&&&\om_{\on{j}_R}
\end{diagram}
where $\sptn_{\on{j}_L}$ and $\om_{\on{j}_R}$
are the images of the left and right momentum maps respectively.

\begin{remark}
For $m<2n$, the $\Sptn$-action has non-compact isotropy group at points of $\MtnmRm$. Hence it cannot be proper \cite[Proposition 2.3.8 (i)]{OrtegaRatiu2004}.
\end{remark}

\subsection{Adjoint orbit correspondence}\label{section:adjointcorrespondence}

By Corollary \ref{coro} there is a one-to-one correspondence
between coadjoint orbits in the images $\sptn^*_{\on{J}_L}$ and $\om^*_{\on{J}_R}$.
Equivalently, since $\llangle\cdot,\cdot\rrangle$ is $\Ad$-invariant, we have a correspondence between
adjoint orbits in $\sptn_{\on{j}_L}$ and  $\om_{\on{j}_R}$.

From \cite{Xu2003} we know  that every matrix $E\in\MtnmR$ of rank $m$
has an singular-value-decomposition-like representation as $E=SDO$ with $S\in\Sptn$, $O\in\Om$,
and $D$ given by

\begin{equation*}
D = \begin{blockarray}{cccc}
\begin{block}{*{3}{>{\scriptstyle}c}c}
p&q&p \\
\end{block}
\begin{block}{[ccc] >{\scriptstyle}c} 
\Sigma&0&0&p \\ 0&I&0&q \\ 0&0&0&r \\ 0&0&\Sigma&p \\ 0&0&0&q \\ 0&0&0&r \\
\end{block}
\end{blockarray},
\end{equation*}
where $\Sigma$ is a diagonal block with positive entries $\sigma_1,\dots,\sigma_p$.
Here $q=m-2p$ is imposed by the rank condition, and $r=n-p-q = n-m+p$.
Since $\on{j}_R(E)$ is $\Om$-conjugate to
\begin{equation*}
\on{j}_R(D) = -\frac{1}{2}D^\top\mathbb{J}D
= -\frac{1}{2}\begin{blockarray}{cccc} 
\begin{block}{*{3}{>{\scriptstyle}c}c}
p&q&p\\
\end{block}
\begin{block}{[ccc] >{\scriptstyle}c}
0&0&\Sigma^2&p \\ 0&0&0&q \\ -\Sigma^2&0&0&p \\
\end{block}
\end{blockarray}
\in\om,
\end{equation*}
we conclude that the image of $\on{j}_R$ consists of the adjoint orbits of $\Om$
that correspond to normal forms that are block diagonal, with entries
$-\frac{1}{2}\begin{bmatrix} 0& \sigma_1^2 \\ -\sigma_1^2 & 0\end{bmatrix},
\dots,-\frac{1}{2}\begin{bmatrix} 0& \sigma_p^2 \\ -\sigma_p^2 & 0\end{bmatrix}$, and a $q\times q$ zero block. On the other hand $\on{j}_L(E)$ is $\Sptn$-conjugate to
\begin{equation*}
\on{j}_L(D) =-\frac{1}{2}DD^\top \mathbb{J}
= -\frac{1}{2}\begin{blockarray}{ccccccc}
\begin{block}{*{6}{>{\scriptstyle}c}c}
p&q&r&p&q&r\\
\end{block}
\begin{block}{[cccccc] >{\scriptstyle}c}
0&0&0&\Sigma^2&0&0&p \\ 0&0&0&0&I&0&q \\ 0&0&0&0&0&0&r \\ -\Sigma^2&0&0&0&0&0&p \\ 0&0&0&0&0&0&q \\ 0&0&0&0&0&0&r \\
\end{block}
\end{blockarray}\in\sptn,
\end{equation*}
hence the image of $\on{j}_L$ consists of the adjoint orbits of $\Sptn$
that correspond to normal forms that are block diagonal, with entries
$-\frac{1}{2}\begin{bmatrix} 0& \sigma_1^2 \\ -\sigma_1^2 & 0\end{bmatrix},
\dots, -\frac{1}{2}\begin{bmatrix} 0& \sigma_p^2 \\ -\sigma_p^2 & 0\end{bmatrix}$,
$q$ blocks of type $-\frac{1}{2}\begin{bmatrix} 0& 1 \\ 0& 0\end{bmatrix}$, and an $r\times r$ zero block. 

We conclude that the adjoint orbit correspondence is between the two above mentioned orbits,
characterized by the integer $p$ and the positive values $\sigma_1,\dots,\sigma_p$.

\begin{remark}
The adjoint orbit correspondence was described in \cite[p.505]{KazhdanKostantSternberg1978} in the special case $q=0$.
\end{remark}

\subsection{Relations between the \texorpdfstring{$(\Un, \Um)$ and $(\Sptn,\Om)$}{(U(n),U(m)) and (Sp(2n,R), O(m))} momentum maps}

As symplectic manifolds, $(\MnmC,\Omega_\mathbb{C})$ and $(\MtnmR,\Omega_\mathbb{R})$ are 
isomorphic, where we now use obvious notation to distinguish between symplectic forms. In fact, under the identification
\[
E_\CC = E_1+iE_2 \in \MnmC \longleftrightarrow E_\RR = \begin{bmatrix} E_1 \\ E_2 \end{bmatrix} \in \MtnmR 
\]
we see that
\[
\Omega_\mathbb{C}(E_\CC,F_\CC) = \on{Im} \Tr(E_\CC^\dagger F_\CC) =   \Tr (E_1^\top F_2 - E_2^\top F_1) = \Omega_\mathbb{R}( E_\RR, F_\RR ).
\]

We can realize $\un$ as a Lie subalgebra of $\sp(2n,\RR)$
with the map 
$$\ell:\zeta_1+i\zeta_2\in\un\mapsto\begin{bmatrix} \zeta_1& -\zeta_2 \\ \zeta_2 & \zeta_1\end{bmatrix}
\in\sp(2n,\RR)$$
(noting that $\zeta_1^\top = -\zeta_1, \zeta_2^\top = \zeta_2$).
Denoting by $i$ the inclusion of $\om$ into $\um$, we obtain:

\begin{proposition}The diagram
\[
\xymatrix{
\sp(2n,\RR)^*\ar[d]_{\ell^*} &\MtnmR \ar[l]_{\J_{\Sptn}}\ar[r]^{\J_{\Om}} \ar[d]_{=}&
{\mathfrak{o}(m)}^*\\
\un^*& \,\,\MnmC\,\,\ar[l]_{{\J}_{\Un}} \ar[r]^{{\J}_{\Um}} &
\um^*\ar[u]_{i^*}\\
}
\]
commutes, where here momentum maps are labelled by their corresponding groups.
\end{proposition}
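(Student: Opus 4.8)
The plan is to verify commutativity of the diagram by checking the left square and the right square separately, working directly with the explicit momentum map formulas from \eqref{lere} and Proposition~\ref{prop:unummomentummaps}, together with the symplectic identification $\Omega_\CC(E_\CC,F_\CC)=\Omega_\RR(E_\RR,F_\RR)$ already established. The vertical map on the middle column is the identity under this identification, so the content of the diagram is: (a) for the left square, $\ell^*\circ\J_{\Sptn} = \J_{\Un}$ as maps $\MtnmR\cong\MnmC \to \un^*$, and (b) for the right square, $i^*\circ\J_{\Um} = \J_{\Om}$ as maps $\MnmC\cong\MtnmR \to \om^*$.

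For the left square, I would pair both sides against an arbitrary $\zeta = \zeta_1+i\zeta_2\in\un$. By definition of the dual map, $\langle \ell^*\J_{\Sptn}(E_\RR),\zeta\rangle = \langle \J_{\Sptn}(E_\RR),\ell(\zeta)\rangle = \tfrac12\Omega_\RR(\ell(\zeta)E_\RR, E_\RR)$ using \eqref{lere}. On the other side, $\langle\J_{\Un}(E_\CC),\zeta\rangle = \tfrac12\Omega_\CC(\zeta E_\CC, E_\CC)$. So it suffices to check that $\ell(\zeta)E_\RR$ corresponds, under $E\mapsto E_\RR$, to $\zeta E_\CC$; this is the statement that $\ell$ is compatible with the two module structures, i.e. writing $\zeta E_\CC = (\zeta_1 E_1 - \zeta_2 E_2) + i(\zeta_2 E_1 + \zeta_1 E_2)$ and comparing with $\ell(\zeta)\begin{bmatrix}E_1\\ E_2\end{bmatrix} = \begin{bmatrix}\zeta_1 E_1 - \zeta_2 E_2\\ \zeta_2 E_1 + \zeta_1 E_2\end{bmatrix}$. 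Once this is noted, equality of the two pairings is immediate from $\Omega_\CC = \Omega_\RR$ under the identification. For the right square, pair against $\xi\in\om\subset\um$; here the relevant map is the \emph{inclusion} $i:\om\hookrightarrow\um$ (a real antisymmetric matrix is in particular skew-Hermitian), so $\langle i^*\J_{\Um}(E_\CC),\xi\rangle = \langle\J_{\Um}(E_\CC),\xi\rangle = \tfrac12\Omega_\CC(E_\CC\xi, E_\CC)$. Since $\xi$ is real, $E_\CC\xi = E_1\xi + iE_2\xi$ corresponds to $\begin{bmatrix}E_1\xi\\ E_2\xi\end{bmatrix} = E_\RR\xi$, and again $\Omega_\CC = \Omega_\RR$ gives $\tfrac12\Omega_\RR(E_\RR\xi, E_\RR) = \langle\J_{\Om}(E_\RR),\xi\rangle$ by \eqref{lere}.

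There is no serious obstacle here; the only point requiring a little care is bookkeeping the two different kinds of vertical arrows. On the left the map at the algebra level, $\ell:\un\to\sp(2n,\RR)$, is a genuine embedding whose dual $\ell^*$ is a surjection, and one must confirm $\ell$ lands in $\sp(2n,\RR)$ (the hint $\zeta_1^\top=-\zeta_1$, $\zeta_2^\top=\zeta_2$ makes the block matrix Hamiltonian). On the right the map $i:\om\to\um$ is the obvious inclusion, again with $i^*$ a surjection going the ``wrong'' way in the diagram; functoriality of $(-)^*$ is what makes the square commute once the momentum-map identity is checked on all of $\um$ first and then restricted. Everything reduces to the single observation that both $\ell$ (on the symplectic side) and the real embedding $\MnmC\hookrightarrow$ itself (on the orthogonal side) intertwine the relevant left/right actions with those on $\MtnmR$, which is exactly the computation displayed in the paragraph preceding the proposition. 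I would therefore write the proof as two short aligned computations of the pairings, preceded by the one-line remark that $\ell(\zeta)E_\RR \leftrightarrow \zeta E_\CC$ and $E_\RR\xi \leftrightarrow E_\CC\xi$.
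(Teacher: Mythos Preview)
Your proposal is correct and follows essentially the same approach as the paper: verify each square by pairing against an arbitrary Lie algebra element, reduce to the momentum map formulas \eqref{lere} and Proposition~\ref{prop:unummomentummaps}, and use the key observation that $\ell(\zeta)E_\RR \leftrightarrow \zeta E_\CC$ together with $\Omega_\CC = \Omega_\RR$ under the identification. The paper carries out the left-square computation in exactly this way and dismisses the right square with ``proved similarly''; your treatment of the right square via $E_\RR\xi \leftrightarrow E_\CC\xi$ is the natural filling-in of that remark.
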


\begin{proof}
Let $E_\mathbb{R}=\begin{bmatrix} E_1 \\ E_2\end{bmatrix}
\in \MtnmR$ be arbitrary.
For all $\zeta = \zeta_1+i\zeta_2\in\un$ we have
\[
\ell(\zeta)E_\mathbb{R} = \begin{bmatrix} \zeta_1& -\zeta_2 \\ \zeta_2 & \zeta_1\end{bmatrix} \begin{bmatrix} E_1 \\ E_2 \end{bmatrix} = \begin{bmatrix}\zeta_1 E_1 - \zeta_2 E_2 \\ \zeta_2 E_1 + \zeta_1 E_2 \end{bmatrix} \longleftrightarrow (\zeta_1+i\zeta_2)(E_1+iE_2) = \zeta E_\CC \in \MnmC,
\]
and so
\[
\langle\ell^*(\J_{\Sptn}(E_\RR)),\zeta \rangle
=\frac12\Omega_\RR(\ell(\zeta)E_\RR,E_\RR)=\frac12\Omega_\CC\left(\zeta E_\CC, E_\CC \right)
=\langle \J_{\Un}(E_\CC),\zeta\rangle.
\]
The identity $i^*\circ\on{J}_{\Um} = \on{J}_{\Om}$ is proved similarly.
\end{proof}


\section{Matrix analogue of the EPDiff dual pair}\label{s5}

In this section, upon identifying $T^*\MnmR$ with $\MtnmR$, we describe the lifted cotangent action of $\GL(n,\RR)$ and $\GL(m,\RR)$ on $\MtnmR$, and demonstrate that these actions are mutually transitive, and deduce that they define a Lie-Weinstein dual pair on a suitable subset of $\MtnmR$. We then give an explicit description of the adjoint orbit correspondence. Finally, we outline the relationship between $(\GL(n,\RR),\GL(m,\RR))$ momentum maps and the $(\Sptn, \Om)$ momentum maps of the previous section.

\subsection{Commuting Hamiltonian actions}

We identify $T^*\MnmR$ with  $\MnmR\times\MnmR\simeq\MtnmR$ using the 
non-degenerate pairing on $\MnmR$ given by $( X,Y)\mapsto\Tr (X^\top Y)$.
More precisely, 
\begin{equation}\label{parq}
(Q,P):X\in T_Q\MnmR \simeq \MnmR\mapsto\Tr(P^\top X).
\end{equation}
Thus the canonical symplectic form on the cotangent bundle is the same as that induced by the constant symplectic form on the linear space $\MtnmR$:
\[
\Omega(X,Y)=\Tr(X^\top\mathbb{J}Y),\quad X,Y\in\MtnmR.
\]

On  $\MnmR$ we consider the left $\GL(n,\RR)$-action and the right $\GL(m,\RR)$-action,
together with their cotangent lifted action on $T^*\MnmR$:
\begin{equation}\label{gln}
A\cdot(Q,P)=(AQ,(A^\top)^{-1}P),\quad A\in\GL(n,\RR)
\end{equation}
and
\begin{equation}\label{glm}
(Q,P)\cdot B=(QB,P(B^\top)^{-1}), \quad B\in\GL(m,\RR).
\end{equation}

\begin{proposition}
The left $\GL(n,\RR)$-action and right $\GL(m,\RR)$-action on $T^*\MnmR$ are Hamiltonian with cotangent momentum maps 
\[
\langle \J_L(Q,P),\xi\rangle=\Tr(QP^\top \xi),\quad\xi\in\gl(n,\RR)
\]
and 
\[
\langle \J_R(Q,P),\eta\rangle=\Tr(P^\top Q \eta),\quad\eta\in\gl(m,\RR).
\]
\end{proposition}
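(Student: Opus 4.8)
The plan is to verify directly that the claimed maps are momentum maps for the respective cotangent-lifted actions, using the standard formula for the momentum map of a cotangent-lifted action. Recall that if a Lie group $G$ acts on a manifold $N$ and we lift to $T^*N$ with its canonical symplectic form, the resulting action is Hamiltonian with equivariant momentum map $\langle \J(\alpha_q),\xi\rangle = \langle \alpha_q, \xi_N(q)\rangle$, where $\xi_N$ is the infinitesimal generator of the action on $N$. So the computation reduces to: (a) identify the infinitesimal generators of the $\GL(n,\RR)$- and $\GL(m,\RR)$-actions on $\MnmR$, and (b) pair them against the cotangent variable using the pairing \eqref{parq}.

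First I would treat the left $\GL(n,\RR)$-action $Q\mapsto AQ$ on $\MnmR$. Its infinitesimal generator at $Q$ in the direction $\xi\in\gl(n,\RR)$ is $\xi_{\MnmR}(Q) = \xi Q$, obtained by differentiating $e^{t\xi}Q$ at $t=0$. Using the pairing \eqref{parq}, which identifies the covector $(Q,P)$ with $X\mapsto \Tr(P^\top X)$, we get
\[
\langle \J_L(Q,P),\xi\rangle = \Tr(P^\top \xi Q) = \Tr(QP^\top\xi),
\]
the last step by cyclicity of the trace. This is exactly the claimed formula. The right $\GL(m,\RR)$-action $Q\mapsto QB$ is handled the same way: its generator in the direction $\eta\in\gl(m,\RR)$ is $\eta_{\MnmR}(Q) = Q\eta$ (differentiating $Qe^{t\eta}$ at $t=0$), and pairing gives $\langle \J_R(Q,P),\eta\rangle = \Tr(P^\top Q\eta)$, as claimed. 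One should note that these cotangent-lifted momentum maps are automatically $\on{Ad}^*$-equivariant, which is a general fact about cotangent lifts; alternatively this can be checked by hand from the formulas together with the action formulas \eqref{gln} and \eqref{glm}.

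There is no real obstacle here — the statement is a routine application of the cotangent-lift momentum map formula, and the only mildly delicate point is bookkeeping: making sure that the identification $T^*\MnmR\simeq\MtnmR$ via \eqref{parq} is used consistently, and that the sign and transpose conventions in the lifted actions \eqref{gln}, \eqref{glm} are compatible with the convention that the lifted action preserves the canonical (not the negative) symplectic form. The only thing that needs a word of care is checking that the induced symplectic form on $T^*\MnmR$ really matches $\Omega(X,Y)=\Tr(X^\top\mathbb{J}Y)$ under the chosen identification; this is stated in the text just before the proposition and can be cited. So I expect the proof to be short: state the cotangent-lift formula, compute the two infinitesimal generators, and pair.
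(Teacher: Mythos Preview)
Your proposal is correct and follows essentially the same approach as the paper: invoke the general cotangent-lift momentum map formula $\langle \J(\alpha_q),\xi\rangle = \langle \alpha_q,\xi_N(q)\rangle$, compute the infinitesimal generators $\xi Q$ and $Q\eta$, and pair via \eqref{parq} using cyclicity of the trace. The paper's proof is exactly this, written in two lines.
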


\begin{proof}
Every cotangent lifted action is Hamiltonian and has an equivariant momentum map.
The left action momentum map is
\[
\langle \J_L(Q,P),\xi\rangle=(Q,P)(\xi Q)\stackrel{\eqref{parq}}{=}\Tr(P^\top \xi Q)=\Tr(QP^\top \xi),\quad\xi\in\gl(n,\RR).
\]
Similarly,
\[
\langle \J_R(Q,P),\eta\rangle=(Q,P)(Q\eta)\stackrel{\eqref{parq}}{=}\Tr(P^\top Q \eta),\quad\eta\in\gl(m,\RR)
\]
is the cotangent bundle momentum map for the right action.
\end{proof}

When using the {trace form} $\llangle X,Y\rrangle=\Tr(XY)$
to identify $\gl(n,\RR)^*$ with $\gl(n,\RR)$, the momentum maps above take the concise expressions
\begin{equation}\label{jrjl}
\on{j}_L(Q,P)=QP^\top \in\gl(n,\RR),\quad \on{j}_R(Q,P)=P^\top Q\in\gl(m,\RR).
\end{equation}

\subsection{The mutually transitivity property on full rank matrices}\label{sec:mtGLnGLm}

Let $\MnmRm\subset\MnmR$ denote the subset of rank $m$ matrices. In the sequel we will at times identify $\MnmRm$ with linear injective maps from $\RR^m$ to $\RR^n$. 
For $\MnmRm$ to be non-empty, we require $m\le n$. 
$\MnmRm$ is an open subset of $\MnmR$, and so $\MnmRm\times\MnmRm$ is an open subset of $T^*\MnmR\simeq \MnmR\times\MnmR$. Hence the symplectic form on $T^*\MnmR$ restricts to a symplectic form on $\MnmRm\times \MnmRm$. It is clear that $\MnmRm\times\MnmRm$ is preserved by the cotangent lifted actions of  $\GL(n,\RR)$ and $\GL(m,\RR)$.

\begin{proposition}\label{lone}
The group $\GL(m,\RR)$ acts transitively on level sets of the left momentum map $\on{j}_L$ restricted to $\MnmRm\times\MnmRm$.
\end{proposition}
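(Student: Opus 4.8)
The plan is to fix $(Q,P), (Q',P') \in \MnmRm\times\MnmRm$ with $\on{j}_L(Q,P) = \on{j}_L(Q',P')$, i.e.\ $QP^\top = Q'(Q')^\top$... wait, $QP^\top = Q'(P')^\top$, and produce $B\in\GL(m,\RR)$ with $(Q',P') = (QB, P(B^\top)^{-1})$. First I would record the obvious half: from \eqref{jrjl} the left action $(Q,P)\mapsto (AQ,(A^\top)^{-1}P)$ visibly leaves $\on{j}_L(Q,P) = QP^\top$ unchanged, while the right action $(Q,P)\cdot B = (QB, P(B^\top)^{-1})$ also leaves $QP^\top$ unchanged, so the right $\GL(m,\RR)$-action does preserve the level sets of $\on{j}_L$; the content is transitivity.

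The key geometric observation is that since $Q\in\MnmRm$ is an injective linear map $\RR^m\to\RR^n$, the hypothesis $QP^\top = Q'(P')^\top$ together with full rank forces a relationship between the column spaces. Concretely, I would argue that $\on{im}(QP^\top) = \on{im}(Q'(P')^\top)$, and since $P^\top$ is surjective (as $P$ has rank $m$) this image is exactly $\on{im}Q$; likewise it equals $\on{im}Q'$. Hence $\on{im}Q = \on{im}Q'$, so there is a unique $B\in\GL(m,\RR)$ with $Q' = QB$ (uniqueness and invertibility of $B$ come from injectivity of $Q$ and $Q'$ and equality of ranks). It then remains to check $P' = P(B^\top)^{-1}$, equivalently $P'B^\top = P$. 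Substituting $Q' = QB$ into $QP^\top = Q'(P')^\top = QB(P')^\top$ and using that $Q$ is injective (left-cancellable) gives $P^\top = B(P')^\top$, i.e.\ $P = P'B^\top$, which is exactly what we need. Therefore $(Q',P') = (Q,P)\cdot B$, and the two points lie in the same $\GL(m,\RR)$-orbit.

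The main obstacle — really the only nontrivial point — is establishing $\on{im}Q = \on{im}Q'$ cleanly from $QP^\top = Q'(P')^\top$; the rest is linear-cancellation bookkeeping. The cleanest route is the image argument above: for any matrix $X$, $\on{im}(QX) \subseteq \on{im}Q$, with equality when $X$ is surjective, and $P^\top:\RR^n\to\RR^m$... no — $P\in\MnmR$ so $P^\top$ maps $\RR^n \to \RR^m$, and $P^\top$ has rank $m$, hence is surjective onto $\RR^m$; but $QP^\top$ maps $\RR^n\to\RR^n$ and its image is $Q(\on{im}P^\top) = Q(\RR^m) = \on{im}Q$. The same for $Q'(P')^\top$, so equality of the two products yields $\on{im}Q = \on{im}Q'$. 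One should also note the uniqueness of $B$ is automatic, which shows incidentally that $\GL(m,\RR)$ acts freely here, consistent with the later application of Lemma \ref{lemma:free}.
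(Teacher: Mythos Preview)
Your proof is correct and follows essentially the same route as the paper: both establish $\on{im}Q=\on{im}Q'$ from the surjectivity of $P^\top,(P')^\top$, obtain $B\in\GL(m,\RR)$ with $Q'=QB$, and then cancel the injective $Q$ to get $P^\top=B(P')^\top$. Your exposition is slightly more detailed on the image argument, and the closing remark on freeness is a nice addition, but the argument is the same.
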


\begin{proof}
The cotangent $\GL(m,\RR)$-action \eqref{glm} preserves 
the fibers of the momentum map $\on{j}_L$ in \eqref{jrjl}:
\[
\on{j}_L((Q,P)\cdot B)=\on{j}_L(QB,P(B^\top)^{-1})= {QB(P(B^\top)^{-1})^\top = QP^\top} =\on{j}_L(Q,P).
\]
Suppose now that $\on{j}_L(Q,P)=\on{j}_L(Q',P')$. 
From $QP^\top=Q'P'^\top$ we deduce that the linear injective mappings 
corresponding to $Q$ and $Q'$ have the same range, since both $P^\top$ and $P'^\top$ 
correspond to linear surjective maps $\RR^n\to\RR^m$.
Thus there exists $B\in\GL(m,\RR)$ with $Q'=QB$,
and by inserting in the above identity we get $QP^\top=QBP'^\top$.
By the injectivity of $Q$ follows $P^\top=BP'^\top$.
This ensures that $(Q',P')=(Q,P)\cdot B$.
\end{proof}

To prove the transitivity of the $\GL(n,\RR)$-action on level sets 
of the {right} momentum map {$\on{j}_R$},  we will need the fact that any two matrices in $\MnmRm\subset\MnmR$
can be completed to invertible $n\times n$ matrices by using the same matrix.

\begin{lemma}\label{ps}
Assume $m<n$.
Given matrices $Q_1,Q_2\in \MnmRm$,
there exists $X\in \on{M}_{n\times(n- m)}(\RR)$ such that the order $n$ square matrices
 $[Q_1\ X]$ and $[Q_2\ X]$ are invertible.
\end{lemma}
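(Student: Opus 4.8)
The plan is to reduce the statement to a genericity argument about the vanishing locus of a polynomial. Fix $Q_1, Q_2 \in \MnmRm$. For $X \in \on{M}_{n\times(n-m)}(\RR)$, the square matrices $[Q_1\ X]$ and $[Q_2\ X]$ are both invertible precisely when the polynomial function
\[
g(X) := \det[Q_1\ X]\cdot \det[Q_2\ X]
\]
is nonzero. Each factor is a polynomial in the entries of $X$ (it is multilinear in the columns of $X$), so $g$ is a polynomial in the $n(n-m)$ entries of $X$. The claim is that $g$ is not identically zero; then its zero set is a proper Zariski-closed subset of $\on{M}_{n\times(n-m)}(\RR)$, hence its complement is nonempty (indeed dense), and any $X$ in that complement works.

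To see that $g \not\equiv 0$, I would exhibit a single $X$ with $g(X) \ne 0$, i.e. an $X$ that simultaneously completes $Q_1$ and $Q_2$ to invertible matrices. Since $Q_1$ has rank $m$, its column space $V_1 = \on{im} Q_1$ has dimension $m$; pick a complement, i.e. columns spanning an $(n-m)$-dimensional subspace $W$ with $V_1 \oplus W = \RR^n$, and let $X_0$ have those columns, so $[Q_1\ X_0]$ is invertible. This $X_0$ need not work for $Q_2$, but it shows each factor of $g$ is individually not identically zero; to get both at once, note that the set of $X$ making $\det[Q_1\ X]\ne 0$ is a nonempty Zariski-open set, likewise for $Q_2$, and a finite intersection of nonempty Zariski-open subsets of an irreducible affine space (here $\on{M}_{n\times(n-m)}(\RR) \cong \RR^{n(n-m)}$) is nonempty. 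Equivalently and more concretely: $\det[Q_1\ X]$, viewed as a polynomial in $X$, is nonzero, so $\RR[X]$ being an integral domain forces the product $g$ to be nonzero as well. Either phrasing gives $g \not\equiv 0$, and then any $X$ outside the hypersurface $\{g = 0\}$ satisfies the conclusion.

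The only point requiring a moment's care is the irreducibility / integral-domain step — that a product of two nonzero polynomials over $\RR$ is nonzero, equivalently that a nonempty Zariski-open subset of affine space is dense. This is standard, so there is no real obstacle; the lemma is essentially a genericity statement and the argument is short. One could alternatively give a fully elementary proof avoiding even the word "Zariski": pick $X_0$ completing $Q_1$; if $[Q_2\ X_0]$ happens to be singular, perturb $X_0$ slightly — the determinant $\det[Q_1\ X]$ is continuous and nonzero at $X_0$, hence nonzero on a neighborhood, while $\det[Q_2\ X]$ is a nonzero polynomial and so vanishes on a set with empty interior, so some nearby $X$ lies off that set while staying in the neighborhood. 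I would present the polynomial/genericity version as the main argument since it is cleanest, with the perturbation remark as an aside.
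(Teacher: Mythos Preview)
Your argument is correct, but it takes a different route from the paper's. The paper proceeds by an elementary induction on $m$: given the two $m$-dimensional subspaces $\on{im}Q_1$ and $\on{im}Q_2$ of $\RR^n$, one picks a vector $v$ lying in neither (possible since a union of two proper subspaces of $\RR^n$ is never all of $\RR^n$), thereby enlarging both to $(m+1)$-dimensional subspaces containing $v$, and repeats until a common complement $V$ is built; the columns of $X$ are then any basis of $V$. Your approach instead observes that $\det[Q_1\ X]$ and $\det[Q_2\ X]$ are each nonzero polynomials in the entries of $X$, so their product is a nonzero polynomial (integral domain), and hence its nonvanishing locus is nonempty (indeed Zariski-open and dense). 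The paper's argument is more elementary and constructive, requiring only the ``union of proper subspaces'' fact; yours is slicker and yields the stronger conclusion that a \emph{generic} $X$ works, and it extends with no extra effort to any finite collection $Q_1,\ldots,Q_k$. Both are perfectly valid for the purpose at hand.
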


\begin{proof}
An easy induction argument on $m$ ensures that there exists a subspace $V$ of $\RR^n$ that is simultaneously a complement to  both $m$-dimensional subspaces $\on{im} Q_1$ and $\on{im}Q_2$. Then we choose a basis of $V$ and we build the matrix $X$ whose columns are
these basis vectors.
(The induction argument is based on the fact that there exists $v\in\RR^n$ 
that doesn't belong to these two $m$-dimensional subspaces, so
the subspaces $\RR v+\on{im} Q_1$ and  $\RR v+\on{im} Q_2$ both
have dimension $m+1$.)
\end{proof}

\begin{proposition}\label{ltwo}
The group $\GL(n,\RR)$ acts transitively on level sets of the right momentum map $\on{j}_R$ restricted to $\MnmRm\times\MnmRm$.
\end{proposition}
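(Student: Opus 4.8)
The plan is to argue, exactly as in the proof of Proposition \ref{lone}, in two steps: first that the cotangent $\GL(n,\RR)$-action preserves the fibres of $\on{j}_R$, and then that it is transitive on each fibre. For the first step the one-line computation $\on{j}_R\big(A\cdot(Q,P)\big)=\big((A^\top)^{-1}P\big)^\top(AQ)=P^\top A^{-1}AQ=P^\top Q=\on{j}_R(Q,P)$ suffices.

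For transitivity, suppose $\on{j}_R(Q,P)=\on{j}_R(Q',P')$, i.e.\ $P^\top Q=(P')^\top Q'$; we must produce $A\in\GL(n,\RR)$ with $AQ=Q'$ and $A^\top P'=P$ (equivalently $(A^\top)^{-1}P=P'$). I would first reduce to the case $Q=Q'$: since $\GL(n,\RR)$ is transitive on $\MnmRm$ (extend the isomorphism $Qv\mapsto Q'v$ of the images to all of $\RR^n$), choose $A_1$ with $A_1Q=Q'$ and pass to $A_1\cdot(Q,P)=(Q',P_1)$ with $P_1:=(A_1^\top)^{-1}P$. Then $P_1^\top Q'=P^\top A_1^{-1}A_1Q=P^\top Q=(P')^\top Q'$, so $(P_1-P')^\top Q'=0$. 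Relabelling $Q',P_1$ as $\hat Q,\hat P$, the task becomes: given $\hat Q,\hat P,P'\in\MnmRm$ with $(\hat P-P')^\top\hat Q=0$, find $A_2\in\GL(n,\RR)$ fixing $\on{im}\hat Q$ pointwise with $A_2^\top P'=\hat P$; then $A:=A_2A_1$ is the required element.

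If $m=n$ then $\hat Q$ is invertible, forcing $\hat P=P'$ and $A_2=\mathrm{Id}$, so assume $m<n$. Here is where Lemma \ref{ps} enters: applied to the rank-$m$ matrices $\hat P$ and $P'$, it furnishes a subspace $\mathcal K\subset\RR^n$ of dimension $n-m$ that is a common complement of $\on{im}\hat P$ and $\on{im}P'$. Let $\pi:\RR^n\to\RR^m$ be the projection onto $\on{im}P'$ along $\mathcal K$ composed with the inverse of $P':\RR^m\xrightarrow{\sim}\on{im}P'$, so that $\pi P'=I_m$ and $\ker\pi=\mathcal K$. I would then set $A_2:=\mathrm{Id}+\big((\hat P-P')\pi\big)^\top$. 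A direct check gives $A_2^\top P'=P'+(\hat P-P')\pi P'=\hat P$ and $A_2\hat Q=\hat Q+\pi^\top(\hat P-P')^\top\hat Q=\hat Q$, the last using $(\hat P-P')^\top\hat Q=0$. Finally, by the determinant identity $\det(I_n+UV)=\det(I_m+VU)$ we get $\det A_2=\det\big(I_m+\pi(\hat P-P')\big)=\det(\pi\hat P)$, which is nonzero since $\ker\pi=\mathcal K$ meets $\on{im}\hat P$ only at $0$ and $\hat P$ is injective; hence $A_2\in\GL(n,\RR)$.

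The delicate point, and the reason the argument needs the restriction to $\MnmRm$, is the simultaneous fulfilment of the two linear conditions on $A_2$ — that it fix $\on{im}\hat Q$ and that $A_2^\top$ send $P'$ to $\hat P$ — while remaining invertible. The first condition determines $A_2$ on $\on{im}\hat Q$, the second determines $A_2^\top$ on $\on{im}P'$, and it is precisely the existence (via Lemma \ref{ps}) of a single complement $\mathcal K$ to both $\on{im}\hat P$ and $\on{im}P'$ that lets us pin down $\pi$ so that the resulting $A_2$ has $\det A_2=\det(\pi\hat P)\neq 0$. I expect verifying invertibility to be the main obstacle; the rest is formal.
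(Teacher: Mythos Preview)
Your proof is correct, and it uses the same key ingredient as the paper (Lemma~\ref{ps}), but the construction is organised differently.

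The paper does not first reduce to $Q=Q'$. Instead, since $P,P'$ are injective it picks $C\in\GL(n,\RR)$ with $P=C^\top P'$, then applies Lemma~\ref{ps} to the pair $Q$ and $C^{-1}Q'$ (so on the $Q$-side rather than the $P$-side) to obtain $X$ with $D=[Q\ X]$ and $[C^{-1}Q'\ X]$ both invertible. Setting $X':=CX$ and $D':=[Q'\ X']$, the required matrix is $A=D'D^{-1}$; the verification that $A^\top P'=P$ reduces to $(X')^\top P'=X^\top P$, which is automatic from $X'=CX$ and $P=C^\top P'$. No determinant identity is needed: invertibility of $A$ is manifest from its definition as a product of invertible matrices.

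Your route---reduce to $\hat Q=Q'$, apply Lemma~\ref{ps} to $\hat P,P'$, and write $A_2=\mathrm{Id}+\bigl((\hat P-P')\pi\bigr)^\top$ as a rank-$m$ perturbation of the identity---is equally valid. It has the virtue of producing an explicit formula for the correction $A_2$ and making the role of the common complement $\mathcal K$ very transparent in the invertibility step via Sylvester's identity $\det(I_n+UV)=\det(I_m+VU)$. The paper's approach trades this explicitness for a construction in which invertibility is free; the two proofs are of comparable length and difficulty.
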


\begin{proof}
The cotangent $\GL(n,\RR)$-action \eqref{gln}
preserves the fibers of the momentum map $\on{j}_R$ in \eqref{jrjl}:
\[
\on{j}_R(A\cdot(Q,P))=\on{j}_R(AQ,(A^\top)^{-1}P)=Q^\top A^\top(A^\top)^{-1}P=Q^\top P=\on{j}_R(Q,P).
\]

Suppose now that $\on{j}_R(Q,P)=\on{j}_R(Q',P')$, i.e.,
\begin{equation}\label{initial}
(Q')^\top P'=Q^\top P.
\end{equation} 
We are looking for $A\in\GL(n,\RR)$ with properties $Q'=AQ$ and $A^\top P'=P$.
The special case $m=n$ is easy, because in this case all $Q,Q',P,P'\in\GL(n,\RR)$,
so we can put $A=Q'Q^{-1}\in\GL(n,\RR)$ which gives us $A^\top P'=(Q^{-1})^\top(Q')^\top P'=P$.

Next we consider the general case $m<n$.
Since both $P$ and $P'$ are injective, there exists a matrix $C\in\GL(n,\RR)$
such that $P=C^\top P'$. Since $C^{-1} Q'\in \MnmRm$,
by Lemma \ref{ps} there exists a matrix $X\in \on{M}_{n\times(n- m)}(\RR)$ 
such that the two order $n$ matrices
$D=[Q\ X]$ and $[(C^{-1} Q')\ X]$  are both invertible. 
Putting $X'=C X$,
the order $n$ matrix $D'=[Q'\ X']=C[(C^{-1} Q')\ X]$ is invertible too.

Now the matrix 
$A=D'D^{-1}\in \GL(n,\mathbb{R})$ satisfies
$A\cdot(Q,P)=(Q',P')$.
Indeed, because
\begin{equation*}
(X')^\top P'=X^\top C^\top P'=X^\top P,
\end{equation*}
we have that
\[
A^\top P'
=(D^{-1})^\top (D')^\top P'
=(D^{-1})^\top\begin{bmatrix} (Q')^\top P' \\(X')^\top P'\end{bmatrix}
\stackrel{\eqref{initial}}{=}(D^{-1})^\top
\begin{bmatrix} Q^\top P \\X^\top P \end{bmatrix}
{=}(D^{-1})^\top
D^\top P=P.
\]
On the other hand 
$$AQ=D'D^{-1}Q=[Q'\ X']\begin{bmatrix} I \\0 \end{bmatrix} =Q',$$
thus getting the required transitivity conditions.
\end{proof}

We have proved mutual transitivity of the $(\GL(n,\RR), \GL(m,\RR))$ actions.
By injectivity of elements of $\MnmRm$, it is straightforward to see that the right action \eqref{glm} of $\GL(m,\RR)$ on $\MnmRm\times\MnmRm$ is free. 
So by Lemma \ref{lemma:free}, $\on{j}_R$ and $\on{j}_L$ (compare Remark \ref{remark:constantrank}) have constant rank, and thus the momentum maps $\on{j}_L, \on{j}_R$ define a Lie-Weinstein dual pair
\begin{diagram}
&& \MnmRm\times\MnmRm \\
&\ldTo^{\mathrm{j}_{L}} && \rdTo^{\mathrm{j}_{R}} \\
\gl(n,\RR)_{\on{j}_L} &&&&\gl(m,\RR)_{\on{j}_R}
\end{diagram}
where $\gl(n,\RR)_{\on{j}_L}$ and $\gl(m,\RR)_{\on{j}_R}$ are the images of the left and right momentum maps respectively.

\begin{remark}
For $m<n$, the $\GL(n,\RR)$-action has non-compact isotropy group at points of $\MnmRm\times\MnmRm$. Hence it cannot be proper \cite[Proposition 2.3.8 (i)]{OrtegaRatiu2004}.
\end{remark}

\subsection{Adjoint orbit correspondence}\label{sec:adjcorr}

As in the discussion of subsection \ref{section:adjointcorrespondence}, we have a one-to-one correspondence between $\GL(n,\RR)$-orbits in the image of $\on{j}_L$ and $\GL(m,\RR)$-orbits in the image of $\on{j}_R$. 

We now characterise the images of $\on{j}_L$ and $\on{j}_R$, and the adjoint orbit correspondence between these images. Define the sets

{\begin{align*}
S_L &:= \lbrace \zeta \in \gl(n,\RR)\,|\, \rank\zeta = m\rbrace \\
S_R &:= \lbrace \xi \in \gl(m,\RR)\,|\,  \rank\xi\ge 2m-n \rbrace.
\end{align*}}

\begin{lemma}\phantomsection\label{lemma:immom}
\begin{enumerate}[(i)]
\item $\gl(n,\RR)_{\on{j}_L} \subset S_L$.
\item $\gl(m,\RR)_{\on{j}_R}\subset S_R$.
\end{enumerate}
\end{lemma}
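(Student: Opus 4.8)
The plan is to establish both inclusions by directly computing (or bounding) the rank of the matrices $\on{j}_L(Q,P) = QP^\top$ and $\on{j}_R(Q,P) = P^\top Q$, using only that $Q$ and $P$ lie in $\MnmRm$, i.e.\ have full column rank $m$ (equivalently, are injective as linear maps $\RR^m \to \RR^n$), together with the hypothesis $m \le n$.

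For part (i), I would first note that since $P \in \MnmRm$ has rank $m$, its transpose $P^\top$ also has rank $m$ and therefore represents a \emph{surjective} linear map $\RR^n \to \RR^m$. Consequently $\on{im}(QP^\top) = Q\bigl(P^\top(\RR^n)\bigr) = Q(\RR^m) = \on{im} Q$, which is $m$-dimensional because $Q$ is injective. Hence $\rank\bigl(\on{j}_L(Q,P)\bigr) = \rank(QP^\top) = m$ for every $(Q,P) \in \MnmRm \times \MnmRm$, which is precisely the statement $\gl(n,\RR)_{\on{j}_L} \subset S_L$. For part (ii), I would invoke Sylvester's rank inequality, $\rank(AB) \ge \rank A + \rank B - n$ for $A$ of size $m \times n$ and $B$ of size $n \times m$; applying it with $A = P^\top$ and $B = Q$, both of rank $m$, gives $\rank\bigl(\on{j}_R(Q,P)\bigr) = \rank(P^\top Q) \ge 2m - n$, which is exactly the defining condition of $S_R$. (When $2m \le n$ this condition is automatic, so $S_R = \gl(m,\RR)$ and there is nothing to prove in that regime.)

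I do not expect any genuine obstacle: both parts are elementary linear algebra. The only point worth stating carefully is the passage from ``$P$ has full column rank'' to ``$P^\top$ is surjective,'' since this is what forces the image computation in (i) to give exactly $m$ rather than merely an upper bound $m$ paired with the weaker Sylvester lower bound $2m - n$. One could also remark, though the lemma does not require it, that the two inclusions of this lemma will be upgraded to equalities in the sequel (the singular-value-type normal forms will realize every prescribed rank), so that $S_L$ and $S_R$ are the actual images of the momentum maps.
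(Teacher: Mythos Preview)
Your proof is correct and essentially identical to the paper's. Part (i) is the same argument verbatim; for part (ii) the paper spells out the kernel computation $\dim\ker P^\top Q \le \dim\ker P^\top = n-m$ (using injectivity of $Q$) and then rank--nullity, which is precisely the standard proof of the Sylvester inequality you invoke.
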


\begin{proof}
Thinking of $(Q,P)\in\MnmRm\times\MnmRm$ as linear maps, we have that
$Q:\mathbb{R}^m\rightarrow \mathbb{R}^n$ is injective, while $P^\top:\mathbb{R}^n\rightarrow \mathbb{R}^m$ is surjective. Then
\begin{enumerate}[(i)]
\item
$\rank \on{j}_L(Q,P) = \dim \on{im} QP^\top = \dim \on{im} P^\top = m$,
the second equality following from the injectivity of $Q$.
\item
%
$\rank \on{j}_R(Q,P) =\dim \on{im} P^\top Q =m-\dim \on{ker} P^\top Q\ge m- \dim \on{ker} P^\top
=2m-n$, where we use  that $\dim\ker P^\top = n-m$ by the rank-nullity theorem.
\end{enumerate}
\end{proof}

In fact, the momentum maps $\on{j}_L, \on{j}_R$ are surjective onto $S_L, S_R$:

\begin{proposition}\phantomsection\label{prop:surjectivity}
\begin{enumerate}[(i)] 
\item $\gl(n,\RR)_{\on{j}_L} = S_L$.
\item $\gl(m,\RR)_{\on{j}_R} = S_R$.
\end{enumerate}
\end{proposition}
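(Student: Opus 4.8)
The plan is to prove surjectivity by exhibiting, for each target matrix in $S_L$ (resp. $S_R$), an explicit preimage $(Q,P) \in \MnmRm \times \MnmRm$ under $\on{j}_L$ (resp. $\on{j}_R$), and then appeal to the adjoint orbit correspondence together with Lemma \ref{lemma:immom} to reduce the verification to convenient normal forms.

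For part (i): given $\zeta \in \gl(n,\RR)$ with $\rank \zeta = m$, factor $\zeta = QP^\top$ with $Q \in \MnmR$ and $P^\top \in \MnmR$ (i.e. $P \in \MnmR$) both of rank $m$; such a rank factorization always exists for a rank-$m$ matrix, and automatically $Q,P \in \MnmRm$. Then $\on{j}_L(Q,P) = QP^\top = \zeta$ by \eqref{jrjl}, which combined with Lemma \ref{lemma:immom}(i) gives equality. (Alternatively, using the $\GL(n,\RR)$-action one may first reduce $\zeta$ to a normal form supported on an $m\times m$ block and factor that; but the rank factorization argument needs no case analysis.)

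For part (ii): given $\xi \in \gl(m,\RR)$ with $r := \rank \xi \ge 2m-n$, I would use the $\GL(m,\RR)$-action (which acts on the image of $\on{j}_R$ by conjugation, since $\on{j}_R$ is equivariant) to replace $\xi$ by any convenient representative of its adjoint orbit, then produce a preimage for that representative. The key point is the inequality $r \ge 2m-n$, equivalently $\dim\ker\xi = m - r \le n - m$: this is exactly the room needed so that $P^\top$ (a surjection $\RR^n \to \RR^m$ with kernel of dimension $n-m$) can have its kernel arranged to contain the appropriate subspace. Concretely, write $\xi$ in a basis-adapted form, choose $Q \in \MnmRm$ to be (a full-rank completion of) the inclusion of a complement of $\ker\xi$, and choose $P^\top \in \MnmR$ of rank $m$ so that $P^\top Q = \xi$; one checks the constraint $m - r \le n-m$ guarantees $P$ can be taken injective, i.e. $P \in \MnmRm$. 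Then $\on{j}_R(Q,P) = P^\top Q = \xi$, and Lemma \ref{lemma:immom}(ii) finishes.

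The main obstacle is part (ii): unlike the left case, one cannot simply take a free rank factorization, because here one factor ($P^\top$) is forced to be a map out of $\RR^n$ with a fixed $(n-m)$-dimensional kernel while the other ($Q$) must be injective, so the construction genuinely uses the dimension count $2m-n \le r$. I would handle this by reducing $\xi$ via conjugation to, say, a matrix of the form $\begin{bmatrix} \xi_0 & 0 \\ 0 & 0 \end{bmatrix}$ with $\xi_0 \in \GL(r,\RR)$ an appropriate normal form, and then writing down $Q$ and $P$ block-wise so that injectivity of both is manifest; the bookkeeping that the blocks fit inside $\RR^n$ is precisely where the hypothesis $r \ge 2m-n$ enters.
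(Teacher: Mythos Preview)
Your treatment of (i) is correct and in fact cleaner than the paper's: the paper puts $\zeta$ into real Jordan canonical form and then writes down explicit block matrices $Q,P$, whereas your rank-factorisation argument dispatches the case in one line. The price you pay is that you do not obtain the explicit normal form the paper uses to read off the adjoint orbit correspondence in the subsequent corollary, but for the bare surjectivity statement your argument is preferable.

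Part (ii), however, contains a genuine error in the final paragraph. You propose to conjugate $\xi$ to the block form $\begin{bmatrix} \xi_0 & 0 \\ 0 & 0 \end{bmatrix}$ with $\xi_0\in\GL(r,\RR)$, but such a similarity class representative does \emph{not} exist in general: a rank-$r$ matrix need not be similar to one whose restriction to an $r$-dimensional invariant subspace is invertible. The $2\times 2$ nilpotent $J_2(0)=\begin{bmatrix}0&1\\0&0\end{bmatrix}$ already has rank $1$ but admits no such decomposition, since its only Jordan block is nilpotent. The paper confronts exactly this issue: it passes to real Jordan form and, for each nilpotent block $J_d(0)$ of size $d\ge 2$, uses the $d\times(d-1)$ matrices $\check I_{d-1},\hat I_{d-1}$ satisfying $\hat I_{d-1}^\top\check I_{d-1}=J_{d-1}(0)$. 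The count $q\le n-m$ of nilpotent blocks (equivalently $\rank\xi\ge 2m-n$) is what allows the resulting $(Q,P)$ to fit inside $\MnmRm\times\MnmRm$.

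Your earlier, vaguer outline can in fact be rescued without any normal-form reduction: take $Q=\begin{bmatrix} I_m\\0\end{bmatrix}\in\MnmRm$, so that $P^\top Q=\xi$ forces $P^\top=\begin{bmatrix}\xi & Y\end{bmatrix}$ for some $Y\in\on{M}_{m\times(n-m)}(\RR)$; then $P^\top$ is surjective iff the columns of $Y$ complete $\on{im}\xi$ to all of $\RR^m$, which requires exactly $n-m\ge m-r$, i.e.\ $r\ge 2m-n$. This is the ``dimension count'' you correctly identified, but your concrete block-diagonal proposal obscured it.
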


\begin{proof}
\begin{enumerate}[(i)]
\item

Let $\zeta \in S_L$. We wish to show $\zeta$ is in the image of $\on{j}_L$. By left $\GL(n,\RR)$-equivariance of $\on{j}_L$, we may assume $\zeta$ is in (real) Jordan canonical form
\begin{equation}\label{eqn:zetacan}
\zeta = \begin{bmatrix} J_{c_1}(\lambda_1) \\ & \ddots \\ && J_{c_p}(\lambda_p) \\ &&& J_{d_1}(0) \\ &&&&\ddots \\ &&&&& J_{d_q}(0) \\ &&&&&& 0_{(n-m-q)\times (n-m-q)}\end{bmatrix} \in \gl(n,\RR),
\end{equation}
where $J_{c_i}(\lambda_i)\in\on{M}_{c_i\times c_i}(\mathbb{R})$ denotes the (real) $i$th Jordan block corresponding to \emph{non-zero} generalised eigenvalue $\lambda_i$, and $J_{d_j}(0)\in \on{M}_{d_j\times d_j}(\mathbb{R})$ is the $j$th \emph{non-trivial} (i.e., with $d_j\ge 2$) Jordan block corresponding to \emph{zero} generalised eigenvalues. The dimension of the zero block follows from the condition
\begin{equation}\label{eqn:zetarank}
m = \rank \zeta = \sum_{i=1}^p c_i + \sum_{j=1}^q (d_j-1),
\end{equation}
which implies $n - \sum_{i=1}^p c_i - \sum_{j=1}^q d_j = n-m-q.$

To construct a suitable $(Q,P)\in \MnmRm\times\MnmRm$ mapping to $\zeta$ under $\on{j}_L$: for $d\ge 2$, let $I_{d-1}\in \on{M}_{(d-1)\times(d-1)}(\mathbb{R})$ denote the identity matrix, $\check{I}_{d-1} = \begin{bmatrix} I_{d-1} \\ 0_{1\times (d-1)} \end{bmatrix}\in \on{M}_{d\times(d-1)}(\mathbb{R})$, and $\hat{I}_{d-1} = \begin{bmatrix} 0_{1\times (d-1)} \\ I_{d-1} \end{bmatrix}\in\on{M}_{d\times(d-1)}(\mathbb{R})$. It is straightforward to check that
\begin{equation} \label{eqn:nilpotent1}
\check{I}_{d-1} \hat{I}_{d-1}^\top = J_d(0).
\end{equation}
Take
\begin{equation}\label{eqn:QP}
Q = \begin{bmatrix} J_{c_1}(\lambda_1) \\ & \ddots \\ && J_{c_p}(\lambda_p)  \\ &&& \check{I}_{d_1-1} \\ &&&& \ddots \\ &&&&& \check{I}_{d_q-1} \\ && 0_{(n-m-q)\times m}\end{bmatrix}, \qquad
P = \begin{bmatrix} I_{c_1} \\ & \ddots \\ && I_{c_p} \\ &&& \hat{I}_{d_1-1} \\ &&&& \ddots \\ &&&&& \hat{I}_{d_q-1} \\ &&& 0_{(n-m-q)\times m} \end{bmatrix}.
\end{equation}
By \eqref{eqn:zetarank}, $(Q,P)\in\MnmRm\times \MnmRm$. Using \eqref{eqn:QP}, \eqref{eqn:nilpotent1}, and \eqref{eqn:zetacan},  it may be checked that $\on{j}_L(Q,P) = QP^\top = \zeta$.

\item
Let $\xi \in S_R$. Again, by right $\GL(m,\RR)$-equivariance of $\on{j}_R$ we may assume that $\xi$ is in (real) Jordan canonical form
\begin{equation}\label{eqn:xican}
\xi = \begin{bmatrix} J_{c_1}(\lambda_1) \\ & \ddots \\ && J_{c_p}(\lambda_p) \\ &&& J_{d_1-1}(0) \\ &&&&\ddots \\ &&&&& J_{d_q-1}(0) \end{bmatrix} \in \gl(m,\RR),
\end{equation}
where $c_i, d_j$, and $\lambda_i$ obey the same conventions as in part (i) (in particular, $d_j\ge 2$), but where  now we let $J_1(0)$ denote the $1\times1$ zero matrix instead of explicitly writing a zero block.
Defining $\check{I}_{d-1}, \hat{I}_{d-1}$ as before, we have
\begin{equation}\label{eqn:nilpotent2}
\hat{I}_{d-1}^\top \check{I}_{d-1} = J_{d-1}(0).
\end{equation}
From the condition $2m-n \le \rank \xi = m-q$, implying $n-m-q\ge 0$, 
we see that the matrices \eqref{eqn:QP} are well-defined, with $(Q,P)\in\MnmRm\times \MnmRm$. Using  \eqref{eqn:QP}, \eqref{eqn:nilpotent2}, and \eqref{eqn:xican}, it may be checked that $\on{j}_R(Q,P) = P^\top Q = \xi$.

\end{enumerate}
\end{proof}


Examining the proof of Proposition \ref{prop:surjectivity} gives an explicit characterisation of the adjoint orbit correspondence for our dual pair:

\begin{corollary}
Given 
\begin{itemize}
\item integers $p,q$ with $0\le p\le m$ and $0\le q\le \on{min}\lbrace m, n-m\rbrace$;
\item complex numbers $\lambda_1,\ldots, \lambda_p$,  with $\on{Im} \lambda_i \ge 0$;
\item integers $c_1, \ldots, c_p, d_1,\ldots, d_q$ satisfying $c_i\ge 1$ and $c_i$ even if $\on{Im} \lambda_i > 0$, $d_j\ge 2$, and $\sum_{i=1}^p c_i +\sum_{j=1}^q (d_j-1) = m$.
\end{itemize}
Then there is a one-to-one correspondence between the adjoint orbits through elements \eqref{eqn:zetacan} in $\gl(n,\RR)_{\on{j}_L}$ and \eqref{eqn:xican} in $\gl(m,\RR)_{\on{j}_R}$.
\end{corollary}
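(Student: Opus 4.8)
The plan is to assemble the statement from three ingredients already in place: the abstract orbit correspondence of Corollary~\ref{coro}, the identification $\gl(n,\RR)_{\on{j}_L}=S_L$ and $\gl(m,\RR)_{\on{j}_R}=S_R$ from Proposition~\ref{prop:surjectivity}, and the explicit pair $(Q,P)$ built in the proof of that proposition. Since the trace form $\llangle\cdot,\cdot\rrangle$ is $\Ad$-invariant and non-degenerate on both $\gl(n,\RR)$ and $\gl(m,\RR)$, Corollary~\ref{coro} yields a bijection $\mathcal{O}_{\on{j}_L(x)}\mapsto\mathcal{O}_{\on{j}_R(x)}$ between adjoint orbits in $S_L$ and adjoint orbits in $S_R$, as $x$ ranges over $\MnmRm\times\MnmRm$. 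So it remains to check that the data in the statement enumerate these orbits, on each side, without repetition, and that a matched pair of tuples indexes a pair of corresponding orbits.

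First I would verify the parametrisation. A real matrix has a real Jordan canonical form that is unique once one normalises each non-real pair of conjugate eigenvalues to its representative with $\on{Im}\lambda>0$ (whose real block then necessarily has even size) and leaves real eigenvalues with $\on{Im}\lambda=0$; thus an adjoint (i.e.\ conjugacy) orbit in $\gl(n,\RR)$ is exactly a multiset of real Jordan blocks obeying these conventions. Imposing membership in $S_L$, i.e.\ $\rank\zeta=m$, forces the shape \eqref{eqn:zetacan}: the blocks with non-zero (generalised) eigenvalue together with the non-trivial blocks at $0$ use up $\sum_i c_i+\sum_j(d_j-1)=m$ columns, as in \eqref{eqn:zetarank}, leaving a trivial zero block of size $n-m-q\ge 0$, whence $q\le n-m$, while $q\le m-\sum_i c_i\le m$ and $p\le m$ are automatic from $c_i\ge1$, $d_j\ge2$ and the column count; conversely every tuple subject to the stated constraints produces such a matrix, of rank $m$. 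The same bookkeeping for \eqref{eqn:xican} --- total size $\sum_i c_i+\sum_j(d_j-1)=m$ and $\rank\xi=m-q$, so $\xi\in S_R$ exactly when $m-q\ge 2m-n$, i.e.\ when $q\le n-m$ --- shows that the identical tuples parametrise the adjoint orbits in $S_R$ bijectively.

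Finally I would read off from the proof of Proposition~\ref{prop:surjectivity} that, for a fixed admissible tuple, the single pair $(Q,P)$ in \eqref{eqn:QP} lies in $\MnmRm\times\MnmRm$ and satisfies simultaneously $\on{j}_L(Q,P)=QP^\top=\zeta$ with $\zeta$ as in \eqref{eqn:zetacan} and $\on{j}_R(Q,P)=P^\top Q=\xi$ with $\xi$ as in \eqref{eqn:xican}: block by block this is just $J_{c_i}(\lambda_i)I_{c_i}^\top=J_{c_i}(\lambda_i)=I_{c_i}^\top J_{c_i}(\lambda_i)$ on the eigenvalue blocks and the two identities \eqref{eqn:nilpotent1} and \eqref{eqn:nilpotent2} on the nilpotent blocks. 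Hence under the bijection of Corollary~\ref{coro} the orbit of $\zeta$ corresponds to the orbit of $\xi$, and by the previous paragraph each side is traversed exactly once as the tuple varies, which is the assertion. The one step requiring genuine care is the orbit count: checking that the real-Jordan normalisations on $\lambda_i$ and $c_i$ neither over- nor under-count, and that the single inequality $q\le n-m$ is simultaneously equivalent to ``$n-m-q\ge0$'' on the $S_L$ side and to ``$\rank\xi\ge 2m-n$'' on the $S_R$ side; the rest is bookkeeping on identities already verified in the excerpt.
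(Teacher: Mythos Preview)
Your proposal is correct and follows essentially the same approach as the paper: the paper's ``proof'' is simply the single sentence preceding the corollary, ``Examining the proof of Proposition~\ref{prop:surjectivity} gives an explicit characterisation of the adjoint orbit correspondence for our dual pair,'' and you have carried out exactly that examination, using the same pair $(Q,P)$ from \eqref{eqn:QP} together with \eqref{eqn:nilpotent1} and \eqref{eqn:nilpotent2} to realise the matched orbits simultaneously, and invoking Corollary~\ref{coro} for the abstract bijection. Your additional bookkeeping on the real Jordan form (why the constraints on $p,q,c_i,d_j$ and the inequality $q\le n-m$ are precisely what is needed on both the $S_L$ and $S_R$ sides) is detail the paper leaves to the reader.
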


\subsection{Relations between the \texorpdfstring{$(\GL(n,\RR),\GL(m,\RR))$}{(GL(n,R),GL(m,R))} and \texorpdfstring{$(\Sptn,\Om)$}{(Sp(2n,R),O(m))} momentum maps}

We can realize $\gl(n,\RR)$ as a Lie subalgebra of $\sp(2n,\RR)$
with the map 
$$\ell:\zeta\in\gl(n,\RR)\mapsto\begin{bmatrix} \zeta& 0 \\ 0& -\zeta^\top\end{bmatrix}
\in\sp(2n,\RR).$$
Denoting by $i$ the inclusion of $\om$ into $\gl(m,\RR)$, we obtain:

\begin{proposition}The diagram
\[
\xymatrix{
\sp(2n,\RR)^*\ar[d]_{\ell^*} &\MtnmR \ar[l]_{\J_{\Sptn}}\ar[r]^{\J_{\Om}} \ar[d]_{=}&
{\mathfrak{o}(m)}^*\\
\gl(n,\RR)^*& \,\,T^*\MnmR\,\,\ar[l]_{{\J}_{\GL(n,\RR)}} \ar[r]^{{\J}_{\GL(m,\RR)}} &
\gl(m,\RR)^*\ar[u]_{i^*}\\
}
\]
commutes, where here momentum maps are labelled by their corresponding groups.
\end{proposition}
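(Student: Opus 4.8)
The plan is to verify the commutativity of each half of the diagram directly, working with the explicit formulas for all four momentum maps that have already been established. For the left half I would show $\ell^* \circ \J_{\Sptn} = \J_{\GL(n,\RR)}$ under the identification $T^*\MnmR \simeq \MtnmR$ given by $(Q,P) \leftrightarrow E = \begin{bmatrix} Q \\ P \end{bmatrix}$; for the right half I would show $i^* \circ \J_{\Om}$ (wait — the arrow goes the other way) — more precisely, I would show $i^* \circ \J_{\Om} = \J_{\GL(m,\RR)}$, again under the identification $E = \begin{bmatrix} Q \\ P \end{bmatrix}$. The method is exactly parallel to the proof of the analogous proposition in Section \ref{s4} relating the $(\Un,\Um)$ and $(\Sptn,\Om)$ momentum maps, so I would model the argument closely on that.

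For the left square, I would compute, for $\zeta \in \gl(n,\RR)$ and $E = \begin{bmatrix} Q \\ P \end{bmatrix} \in \MtnmR$, the image $\ell(\zeta) E = \begin{bmatrix} \zeta & 0 \\ 0 & -\zeta^\top \end{bmatrix}\begin{bmatrix} Q \\ P\end{bmatrix} = \begin{bmatrix} \zeta Q \\ -\zeta^\top P\end{bmatrix}$, and then evaluate
\[
\langle \ell^*(\J_{\Sptn}(E)), \zeta\rangle = \langle \J_{\Sptn}(E), \ell(\zeta)\rangle = \tfrac12 \Omega(\ell(\zeta) E, E) = \tfrac12 \Tr\!\left(\begin{bmatrix} \zeta Q \\ -\zeta^\top P\end{bmatrix}^{\!\!\top} \mathbb{J} \begin{bmatrix} Q \\ P\end{bmatrix}\right).
\]
Expanding $\mathbb{J} = \begin{bmatrix} 0 & I \\ -I & 0\end{bmatrix}$, the right side becomes $\tfrac12\Tr(Q^\top\zeta^\top P + P^\top\zeta Q) = \Tr(P^\top \zeta Q) = \Tr(QP^\top\zeta) = \langle \J_{\GL(n,\RR)}(Q,P), \zeta\rangle$, using $\Tr(Q^\top\zeta^\top P) = \Tr(P^\top\zeta Q)$ by transpose-invariance of the trace. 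That settles the left square. For the right square, I would similarly compute for $\xi \in \om \subset \gl(m,\RR)$ that $E\,i(\xi) = \begin{bmatrix} Q\xi \\ P\xi\end{bmatrix}$, so
\[
\langle i^*(\J_{\Om}(E)), \xi\rangle = \langle \J_{\Om}(E), i(\xi)\rangle = \tfrac12\Omega(E\xi, E) = \tfrac12\Tr(\xi^\top E^\top \mathbb{J} E).
\]
Here $E^\top \mathbb{J} E = Q^\top P - P^\top Q$, so this equals $\tfrac12\Tr(\xi^\top(Q^\top P - P^\top Q)) = -\tfrac12\Tr(\xi(Q^\top P - P^\top Q))$ using $\xi^\top = -\xi$; expanding and again using trace cyclicity and transpose-invariance gives $\Tr(P^\top Q\,\xi) = \langle\J_{\GL(m,\RR)}(Q,P),\xi\rangle$. (One must be slightly careful: $\J_{\GL(m,\RR)}$ is $\gl(m,\RR)^*$-valued, so the identity $i^*\J_{\Om} = \J_{\GL(m,\RR)}$ is an identity of functionals on $\om$ obtained by restriction, exactly as in the Section \ref{s4} proposition.)

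I do not expect a genuine obstacle here — the whole proposition is a bookkeeping exercise of the same flavour as its $(\Un,\Um)$ analogue, and the key algebraic identities ($\Tr(AB) = \Tr(BA)$, $\Tr(A) = \Tr(A^\top)$, and the block structure of $\mathbb{J}$ and of $\ell(\zeta)$) are completely routine. The one point deserving a sentence of care is consistency of the two identifications of $T^*\MnmR$: the pairing \eqref{parq} sends $(Q,P)$ to the functional $X \mapsto \Tr(P^\top X)$, and one should confirm that under the resulting identification $T^*\MnmR \simeq \MnmR \times \MnmR \simeq \MtnmR$ the canonical cotangent symplectic form really is $\Omega(X,Y) = \Tr(X^\top\mathbb{J}Y)$ with the stated $\mathbb{J}$ — but this has already been asserted in the text preceding the proposition, so I would simply invoke it. Thus the proof is: verify the left square by the displayed computation, verify the right square by the analogous one, and note that commutativity of the two squares is precisely the assertion of the proposition.
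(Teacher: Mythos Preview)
Your approach is correct and essentially identical to the paper's: both verify the left square by expanding $\tfrac12\Omega(\ell(\zeta)E,E)$ in block form to obtain $\Tr(QP^\top\zeta)$, and handle the right square by the analogous computation. One small slip: the right square asserts $i^*\circ\J_{\GL(m,\RR)} = \J_{\Om}$ (since $i^*:\gl(m,\RR)^*\to\om^*$), not $i^*\circ\J_{\Om} = \J_{\GL(m,\RR)}$ as you wrote, but your computation in fact establishes exactly the correct identity --- you show $\langle\J_{\Om}(E),\xi\rangle = \langle\J_{\GL(m,\RR)}(Q,P),\xi\rangle$ for $\xi\in\om$, which is precisely $\J_{\Om} = i^*\circ\J_{\GL(m,\RR)}$.
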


\begin{proof}
Let $E=\begin{bmatrix} Q \\ P\end{bmatrix}
\in \MtnmR=T^*\MnmR$ be arbitrary.
For all $\zeta\in\gl(n,\RR)$ we have
\begin{align*}
\langle\ell^*(\J_{\Sptn}(E)),\zeta\rangle
&=\frac12\Omega(\ell(\zeta)E,E)=\frac12\Tr(E^\top\ell(\zeta)^\top \mathbb{J}E)=
\frac12\Tr(Q^\top \zeta^\top P+ P^\top \zeta Q)\\
&=\Tr(QP^\top \zeta)=\langle \J_{\GL(n,\RR)}(E),\zeta\rangle.
\end{align*}
The identity $i^*\circ\on{J}_{\GL(n,\RR)} = \on{J}_{\Om}$ is proved similarly.
\end{proof}


\subsection*{Acknowledgements} 
Both authors wish to thank Tillman Wurzbacher and Reyer Sjamaar for useful discussions, and the referees for careful reading and informative comments. We particularly wish to thank the third referee for flagging an error in the original proof of Proposition \ref{prop:dualpairrank}, and for suggestions to clarify the proofs of Section \ref{sec:adjcorr}. The first author was partially supported by Leverhulme Trust Research Project Grant 2014-112. The second author was supported by the grant PN-III-P4-ID-PCE-2016-0778 of the Romanian Ministry of Research and Innovation CNCS-UEFISCDI, within PNCDI III.


{\footnotesize

\bibliographystyle{new}

\begin{thebibliography}{300}

\bibitem{Artin1957}
E. Artin [1957], {\it Geometric algebra}, Dover Publications.

\bibitem{BalleierWurzbacher2012}
C. Balleier and T. Wurzbacher [2012], On the geometry and quantization of symplectic Howe pairs, \textit{Math. Z.}, \textbf{271} (1-2), 577--591.

\bibitem{Blaom2001}
A. Blaom [2001], {\it A geometric setting for Hamiltonian perturbation theory}, vol. 153, Memoirs of the American Mathematical Society, AMS.

\bibitem{GayBalmazVizman2011}
F. Gay-Balmaz and C. Vizman [2012], Dual pairs in fluid dynamics, \textit{Ann. Global Anal. Geom.}, \textbf{41} (1), 1--24.


\bibitem{HolmMarsden2004}
D.~D. Holm and J.~E. Marsden [2004], Momentum maps and
measure-valued solutions (peakons, filaments and sheets) for the
EPDiff equation, \textit{Progr. Math.} \textbf{232}, 203--235.

\bibitem{Howe1989}
R. Howe [1989], Remarks on classical invariant theory, \textit{Trans. Amer. Math. Soc.}, \textbf{313} (2), 539-570.

\bibitem{KazhdanKostantSternberg1978}
D. Kazhdan, B. Kostant, and S. Sternberg [1978], Hamiltonian group actions and dynamical systems of Calogero type, \textit{Comm. Pure Appl. Math.}, \textbf{31} (4), 481-507.

\bibitem{Kudla1984}
S. Kudla [1984], 
Seesaw dual reductive pairs,
\textit{Progr. Math.} \textbf{46}, 244--268.

\bibitem{Lee2013}
J. Lee [2013], {\it Introduction to smooth manifolds\textemdash second edition}, vol. 218, Graduate Texts in Mathematics, Springer.

\bibitem{LermanMontgomerySjamaar1993}
E. Lerman, R. Montgomery, and R. Sjamaar [1993], Examples of singular reduction, 
Symplectic Geometry (Warwick, 1990) (D. Salamon, ed.), London Mathematical Society Lecture Note Series, vol. 192, Cambridge University Press, 127-155.

\bibitem{LibermannMarle1987}
P. Libermann and C.-M. Marle [1987], 
{\it Symplectic geometry and analytical mechanics}, D. Reidel Publishing Company.

\bibitem{Lie1890}
S. Lie [1890], {\it Theorie der Tranformationsgruppen},
(Zweiter Abschnitt, unter mitwirkung von Prof. Dr. Friedrich Engel), Teubner.

\bibitem{MarsdenRatiu1999}
J.~E. Marsden and T.~S. Ratiu [1999], \textit{Introduction to mechanics and symmetry}, Second Edition, Springer.

\bibitem{MarsdenWeinstein1983}
J.~E. Marsden and A. Weinstein [1983], Coadjoint orbits, vortices,
and Clebsch variables for incompressible fluids, \textit{Phys. D} \textbf{7}, 305--323.

\bibitem{Ortega2003}
J.-P. Ortega [2003], Singular dual pairs, \textit{Diff. Geom. Appl.} \textbf{19} (1), 61-95.

\bibitem{OrtegaRatiu2004}
J.-P. Ortega and T.S. Ratiu [2004], {\it Momentum maps and Hamiltonian
reduction}, vol. 222, Progress in Mathematics, Birkh\"auser.

\bibitem{Skerritt}
P. Skerritt, The frame bundle picture of Gaussian wave packet dynamics in semiclassical mechanics,  \texttt{arXiv:1802.04362 [math-ph]}.

\bibitem{Weinstein1983}
A. Weinstein [1983], The local structure of Poisson manifolds, \textit{J. Diff. Geom.} \textbf{18}, 523--557.

\bibitem{Xu2003}
H. Xu [2003], An SVD-like matrix decomposition and its
applications, \textit{Linear Algebra Appl.}
\textbf{368}, 1--24.

\end{thebibliography}
\addcontentsline{toc}{section}{References}

\end{document}